\newcommand{\calB}{\mathcal{B}}
\newcommand{\calN}{\mathcal{N}}
\newcommand{\IG}{\mathrm{IG}}
\newcommand{\core}{\mathrm{core}}
\newtheorem{theorem}{Theorem}
\newtheorem{corollary}[theorem]{Corollary}
\newtheorem{lemma}[theorem]{Lemma}
\title{Hypergraph burning, matchings, and zero forcing}
\author[A.\ Bonato]{Anthony Bonato}
\author[C.\ Jones]{Caleb Jones}
\author[T.G.\ Marbach]{Trent G.\ Marbach}
\author[T.\ Mishura]{Teddy Mishura}
\author[Z.\ Zhang]{Zhiyuan Zhang}
\address[A1,A2,A3,A4,A5]{Toronto Metropolitan University, Toronto, Canada}
\email[A1]{(A1) abonato@torontomu.ca}
\email[A2]{(A2) caleb.w.jones@torontomu.ca}
\email[A3]{(A3) trent.marbach@torontomu.ca}
\email[A4]{(A4) tmishura@torontomu.ca}
\email[A5]{(A5) zhiyuan.zhang@torontomu.ca}
\keywords{hypergraphs, burning, matchings, zero forcing}
\subjclass{05C65,05C70,05C85}
\begin{document}

\begin{abstract} 
Lazy burning is a recently introduced variation of burning where only one set of vertices is chosen to burn in the first round. In hypergraphs, lazy burning spreads when all but one vertex in a hyperedge is burned. The lazy burning number is the minimum number of initially burned vertices that eventually burns all vertices. We give several equivalent characterizations of lazy burning on hypergraphs using matchings and zero forcing, and then apply these to give new bounds and complexity results. 

We prove that the lazy burning number of a hypergraph $H$ equals its order minus the maximum cardinality of a certain matching on its incidence graph. Using this characterization, we give a formula for the lazy burning number of a dual hypergraph and give new bounds on the lazy burning number based on various hypergraph parameters. We show that the lazy burning number of a hypergraph may be characterized by a maximal subhypergraph that results from iteratively deleting vertices in singleton hyperedges. 

We prove that lazy burning on a hypergraph is equivalent to zero forcing on its incidence graph and show an equivalence between skew zero forcing on a graph and lazy burning on its neighborhood hypergraph. As a result, we show that finding an upper bound on the lazy burning number of a hypergraph is NP-complete, which resolves a conjecture from \cite{BJR}. By applying lazy burning, we show that computing an upper bound on the skew zero forcing number for bipartite graphs is NP-complete. We finish with open problems.

\end{abstract}
\maketitle
\section{Introduction}

Graph burning is a simplified model for the spread of influence in a network. Associated with the process is a parameter introduced in \cite{BJR,BJR1}, the burning number, which quantifies the speed at which the influence spreads to every vertex. Given a graph $G$, the burning process on $G$ is a discrete-time process. At the beginning of the first round, all vertices are unburned. In each round, first, all unburned vertices that have a burned neighbor become burned, and then one new unburned vertex is chosen to burn if such a vertex is available. If at the end of round $k$ every vertex of $G$ is burned, then $G$ is $k$-\emph{burnable}. The \emph{burning number} of $G,$ written $b(G),$ is the least $k$ such that $G$ is $k$-burnable. For further background on graph burning, see the survey \cite{survey} and the book \cite{bbook}.

\emph{Hypergraph burning} was introduced in \cite{BJP,JT} as a natural variant of graph burning. The rules for hypergraph burning are identical to those of burning graphs, except for how burning propagates within a hyperedge. Recall that in a hypergraph, a \emph{singleton} edge contains exactly one vertex. In hypergraphs, the burning spreads to a vertex $v$ in round $r$ if and only if there is a non-singleton hyperedge $\{v,u_1,\ldots,u_k\}$ such that $v$ was not burned and each of $u_1,u_2,\ldots,u_k$ was burned at the end of round $r-1$. In particular, a vertex becomes burned if it is the only unburned vertex in a hyperedge. A natural variation of burning that is our principal focus here is \emph{lazy hypergraph burning}, where a set of vertices is chosen to burn in the first round only; no other vertices are chosen to burn in later rounds.  The \textit{lazy burning number} of $H$, denoted $b_L(H)$, is the minimum cardinality of a set of vertices burned in the first round that eventually burn all vertices. Note that for a hypergraph $H,$ we have that $b_L(H) \le b(H).$ We refer to the set of vertices chosen to burn in the first round as a \emph{lazy burning set}. A lazy burning set $S$ such that $|S|=b_L(H)$ is called \emph{optimal}. If a vertex is burned and not part of a lazy burning set, then we say it is burned by \emph{propagation}; all rounds other than the first where a lazy burning set is chosen are called \emph{propagation rounds}.

We note that we consider a slightly modified rule for lazy burning than the one given in \cite{BJP}, which simplifies our discussion. If a hyperedge $h$ contains exactly one unburned vertex $v$, then $v$ burns. The only difference between this rule and the original one is that now singleton hyperedges \emph{spontaneously burn} in the sense that they cause the vertices they contain to burn immediately. Note that this cannot increase the lazy burning number. Further, no optimal lazy burning set will contain a vertex belonging to a singleton edge, and various bounds (such as $|V(H)|-|E(H)|\leq b_L(H)$ from \cite{BJP}) remain unchanged with only minor variations in their proof.

In the \emph{zero forcing} coloring process, a vertex subset of a graph $G$ is initially chosen to be black, while the remaining vertices are white (some authors alternatively refer to the colors as blue and white, respectively). Consider the following \emph{zero forcing color change rule}: a black vertex $u$ can change the color of a white vertex $v$ if $v$ is the only white neighbor of $u$; we say that $u$ \emph{forces} $v$ or that $v$ is \emph{forced}, and write $u \rightarrow v.$
A \emph{zero forcing set} of $G$ is a set $Z \subseteq V(G)$ of vertices such that if the vertices of $Z$ are black and the rest are white, then every vertex eventually becomes black after repeated applications of the zero forcing color change rule. The minimum cardinality of a zero forcing set for a graph is known as the \textit{zero forcing number} $z(G)$ of the graph $G$. The {\em skew zero forcing color change rule} requires that if a vertex $u$ has exactly one white neighbor $v$, then $u$ forces $v$. The only difference between the two processes is that in skew zero forcing, a vertex does not need to be black to force one of its neighbors. A {\em skew zero forcing set} and the {\em skew zero forcing number $z_0(G)$} are defined analogously.  As shown in \cite{IMA}, we have that $z_0(G)\leq z(G)$. For more background on these coloring processes, see \cite{AIM, FHsurvey, hlsbook, IMA}. 

The paper is organized as follows. In Section~2, we define chronological lists of lazy burning on a hypergraph, which leads to the notion of a $C$-matching. In Corollary~\ref{cor:Burning_equals_CMatching}, we prove that $b_L(H)=|V(H)|-m(H)$, where $m(H)$ is the maximum cardinality of a $C$-matching on the incidence graph of $H$. This provides a formula for the lazy burning number of the dual hypergraph in Corollary~\ref{cor1} and provides various new lower and upper bounds on the lazy burning number. For example, Corollary~\ref{cor:delta-delta} gives that in a linear hypergraph with minimum degree $\delta$ with at least $\delta$ hyperedges, $b_L(H) \geq |V(H)| -|E(H)| + \binom{\delta}{2}$, while Theorem~\ref{upp} shows that $b_L(H) \leq |V(H)| - \left\lceil \frac{|E(H)|}{\Delta}\right\rceil,$ where $\Delta$ is the maximum vertex degree. In Section~3, we characterize the lazy burning number of a hypergraph using its core, which is the hypergraph that results by iteratively deleting vertices in singleton hyperedges; see Theorem~\ref{thm: empty core}. Section~4 focuses on connections with zero forcing, and Theorem~\ref{zero_set_equivalence} shows that lazy burning is equivalent to zero forcing on incidence graphs. We prove in Theorem~\ref{skew_forcing_equals_lazy_burning} that skew zero forcing on a graph is equivalent to lazy burning on its neighborhood hypergraph. Using this result, we prove that computing an upper bound on the lazy burning number of a hypergraph is NP-complete, which answers Problem 11 from \cite{BJP}. In Theorem~\ref{szf on bpt is npc}, we find applications of lazy burning to skew zero forcing and show the skew zero forcing problem on bipartite graphs is NP-complete. The final section lists open problems.

For more background on hypergraphs, see \cite{berge1,berge2,bretto}, and for more background on graph theory, see~\cite{west}.

\section{Chronological Lists and $C$-Matchings}

A concept gaining in popularity in the literature is a \emph{chronological list of zero forcing}.  Given an initial zero forcing set $S_0$, a list of forces $(u_1 \rightarrow v_1, u_2 \rightarrow v_2, \ldots, u_k \rightarrow v_k)$ is defined so that the list of sets $S_0, S_1, S_2, \ldots, S_k$ with $S_{i+1} = S_i \cup \{v_{i+1}\}$ satisfies $N[u_i]\setminus\{v_i\} \subseteq S_{i-1}$. Intuitively, if the set $S_{i-1}$ is a subset of the black vertices at any time during the zero forcing process, then the vertex $u_i$ will force the vertex $v_i$ in the next round if $v_i$ is not already black. It follows that $S_i$ will be a subset of the black vertices at some time in the process. We note that we may have two vertices $v_i$ and $v_j$, with $ v_i$ occurring earlier than $v_j$ in the chronological list, but where vertex $v_j$ would be forced by some black vertex in the zero forcing process in an earlier round than $v_i$. A chronological list of skew zero forces may be defined analogously. If the context is clear, then we abbreviate the lists of either type as a chronological list of forces. 

We illustrate chronological lists in Figure~\ref{fig:ZF_on_C}, where two adjacent black vertices, $a$ and $b$, are initially chosen as our zero forcing set.  
In the first zero forcing propagation round, $c$ and $d$ are forced by $a$ and $b,$ respectively. In the second round, $e$ is forced by either $c$ or $d$. 
However, one possible chronological list of forces is $(b \rightarrow d, d \rightarrow e, e \rightarrow c)$, so $c$ is the last to be forced in the chronological list of forces, even though $c$ is forced before $e$ in the zero forcing process. 

\begin{figure}[htpb!]
    \centering
\includegraphics[width=\linewidth/2]{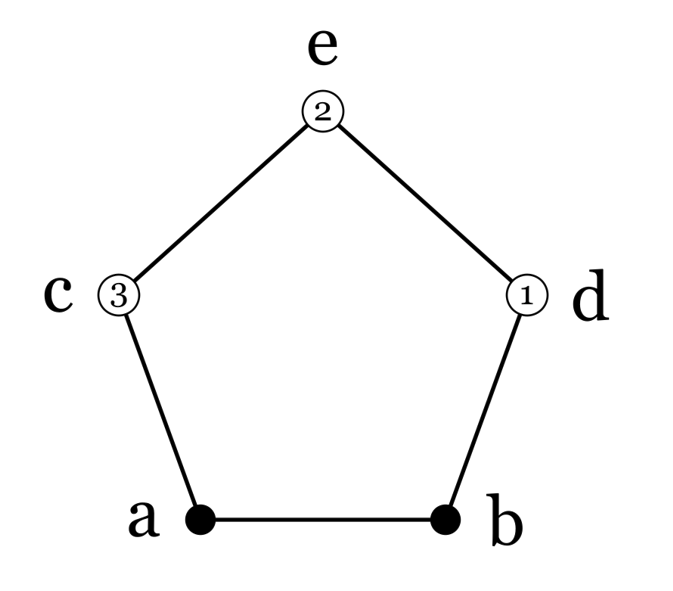}
    \caption{A zero forcing set for the $5$-cycle, with a chronological list of zero forces given by the vertices labeled 1, 2, and 3.}
    \label{fig:ZF_on_C}
\end{figure}

We now introduce an analogous concept for the lazy burning of a hypergraph $H$. 
A \emph{chronological list of lazy burnings}, $(\{h_1,v_1\},\{h_2,v_2\}, \ldots, \{h_k,v_k\})$ with $v_i\in h_i \in E(H)$, is such that the list of sets $$B_0, B_1=B_0\cup\{v_1\}, B_2=B_1\cup \{v_2\}, \ldots, B_k=B_{k-1}\cup\{v_k\}$$ with $B_{i+1} = B_i\cup \{v_{i+1}\}$ satisfying $h_i\setminus\{v_i\} \subseteq B_{i-1}$. 
We note that, if $B_k = V(H)$, then $B_0$ is a burning set for $H$ and $k = n - |B_0|$. 
In the case that we take $B_0$ to be a lazy burning set, if $B_{i-1}$ is burned at any time during the lazy burning process, then each vertex in $h_i\setminus \{v_i\}$ is burned. Hence, the vertex $v_i$ will be burned in the next round if it is not already burned. It follows that the entire hypergraph will be burned at some point and that vertex $v_i$ will be burned by round $i$ within the lazy burning process on $H$. We summarize these observations in the following lemma.

\begin{lemma} \label{lem:ChronList_equals_Burning}
If $(\{h_1,v_1\},\{h_2,v_2\}, \ldots, \{h_k,v_k\})$ is a chronological list of lazy burnings of $H$, then 
$V(H) \setminus \{v_1, v_2, \ldots, v_k\}$ is a lazy burning set for $H$, and the vertex $v_i$ will be burned during or before the $i$-th lazy burning propagation round. 
\end{lemma}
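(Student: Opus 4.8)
The plan is to run an induction on the index $i$ that maintains the invariant that the set $B_i$ appearing in the chronological list is contained in the set of vertices burned after $i$ propagation rounds. Write $S = V(H)\setminus\{v_1,\ldots,v_k\}$ for the proposed lazy burning set, and let $F_i$ denote the set of burned vertices at the end of the $i$-th propagation round when the process starts from $F_0 = S$. Since the $v_i$ are the newly burned vertices recorded by the chronological list, they are distinct and none lies in $B_0$, so $B_0 \subseteq S = F_0$; this is the base case $B_0 \subseteq F_0$.

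For the inductive step I would assume $B_{i-1}\subseteq F_{i-1}$. Because burned vertices stay burned, $F_{i-1}\subseteq F_i$, and since $B_i = B_{i-1}\cup\{v_i\}$ it suffices to show $v_i \in F_i$. The defining property of the chronological list gives $h_i\setminus\{v_i\}\subseteq B_{i-1}\subseteq F_{i-1}$, so at the start of round $i$ every vertex of the hyperedge $h_i$ except possibly $v_i$ is already burned. Hence either $v_i$ is already burned, or it is the unique unburned vertex of $h_i$ and is burned during round $i$ by the lazy burning rule; in both cases $v_i\in F_i$, giving $B_i\subseteq F_i$. This step simultaneously delivers the timing claim, since $v_i\in F_i$ means $v_i$ is burned during or before the $i$-th propagation round.

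Finally I would assemble the two conclusions. Iterating the invariant yields $\{v_1,\ldots,v_k\}\subseteq F_k$, and combined with $S = F_0\subseteq F_k$ this gives $F_k\supseteq S\cup\{v_1,\ldots,v_k\} = V(H)$. Therefore every vertex is burned and $S$ is indeed a lazy burning set.

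The step I expect to require the most care is reconciling the auxiliary set $B_0$ from the definition of the chronological list with the burning set $S$ asserted in the statement, and in particular the case where $B_k\neq V(H)$: there one must observe that any vertex outside $B_k$ is not among $v_1,\ldots,v_k$ and hence already lies in $S$, so it is burned from the outset and the argument still closes. A secondary point worth stating explicitly is that the chronological ordering need not match the order in which vertices actually burn, so the invariant is phrased as the inclusion $B_i\subseteq F_i$ rather than an equality, which is precisely what the ``during or before'' phrasing in the conclusion is designed to accommodate.
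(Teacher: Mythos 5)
Your proof is correct and follows essentially the same route as the paper, which establishes this lemma via the informal inductive observation preceding its statement: the defining condition $h_i\setminus\{v_i\}\subseteq B_{i-1}$ guarantees that once $B_{i-1}$ is burned, $v_i$ burns in the next propagation round if not already burned. Your explicit invariant $B_i\subseteq F_i$ and the remark that the chronological order need not match the actual burning order are just a more careful write-up of the same argument.
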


A \emph{matching} in a graph is a set of edges such that no two edges in the set share a common vertex. 
The \emph{incidence graph} $\IG(H)$, also known as a \emph{Levi graph}, of a hypergraph $H$ is the bipartite graph with vertex set $V(H)\cup E(H)$, and with an edge in $\mathrm{IG}(H)$ between vertex $v\in V(H)$ and vertex $h\in E(H)$ whenever $v\in h$. We note that there is an implicit ordering on the parts $V(H)$ and $E(H)$ of the bipartition. 
We will write an edge of $\IG(H)$ as an ordered pair, say $vh$, where $v\in V(H)$ and $h \in E(H)$. 

A chronological list of lazy burning on a hypergraph $H$ can be converted to a matching in $\mathrm{IG}(H)$, although we need a more restrictive structure than this for our purposes. We define a \emph{$C$-matching} in $\mathrm{IG}(H)$ to be a list of edges from $\mathrm{IG}(H)$, say $M=(v_1h_1, v_2h_2, \ldots, v_kh_k)$, such that
$$ h_i \cap V_{i}=\emptyset,$$
where $V_{i} = \{v_{i+1}, v_{i+2}, \ldots, v_{k}\}$ and $1\leq i\leq k$. 
Define $m(H)$ to be the maximum cardinality of a $C$-matching on $\mathrm{IG}(H)$.

\begin{figure}[h!]
    \centering
    \begin{subfigure}{0.40\textwidth}
        \centering
\includegraphics[width=\linewidth]{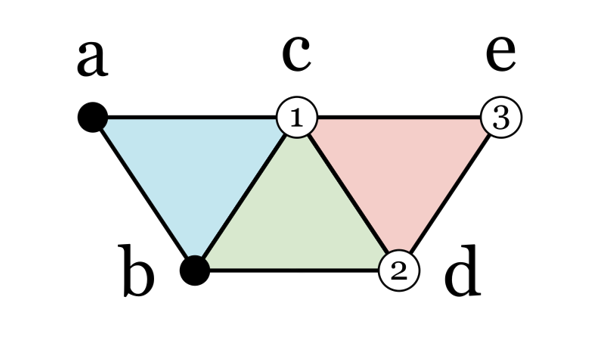}
        \caption{A hypergraph with hyperedges $\{a,b,c\},\{b,c, d\},\{c,d,e\}$.\ A lazy burning set $\{a,b\}$ is indicated by the filled vertices, and a chronological list of lazy burnings is indicated by the vertices containing numbers.}
        \label{fig:image1}
    \end{subfigure}
    \hfill
    \begin{subfigure}{0.50\textwidth}
        \centering
        \includegraphics[width=\linewidth]{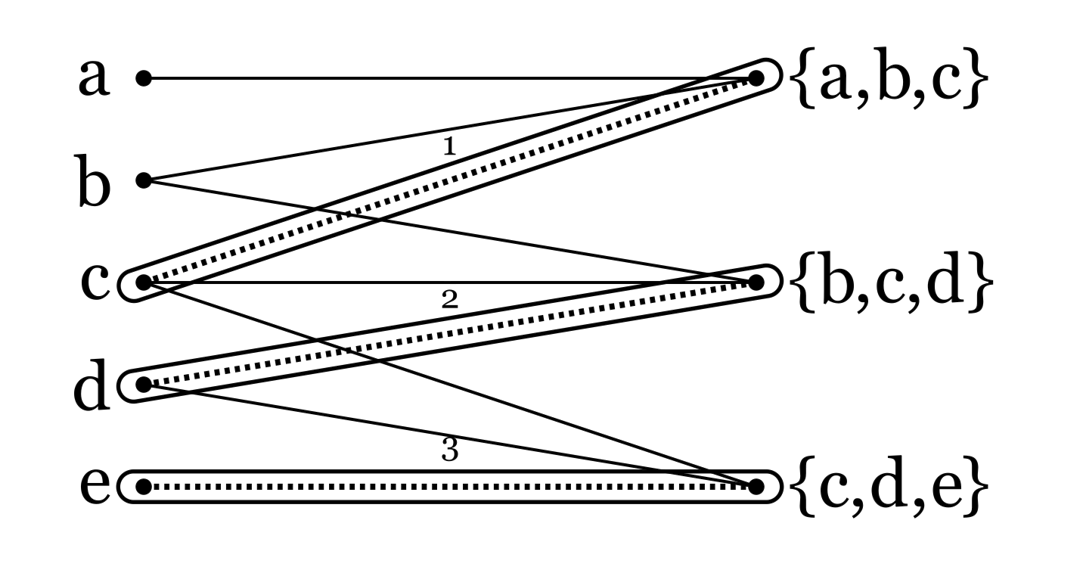}
        \caption{The corresponding incidence graph of the hypergraph.\ A $C$-matching is indicated by the circled edges, labeled from 1 to 3 based on their appearance in the $C$-matching.}
        \label{fig:image2}
    \end{subfigure}
    \caption{Comparison of a chronological list for lazy burning (A) to a $C$-matching on the corresponding incidence graph (B).}
    \label{fig:two_images}
\end{figure}

As the following lemma demonstrates, chronological lists and $C$-matchings are equivalent.

\begin{lemma} \label{lem:ChronList_equals_CMatching}
The list $(\{h_1,v_1\},\{h_2,v_2\}, \ldots, \{h_k,v_k\})$ is a chronological list of lazy burning on $H$ if and only if $(v_1h_1, v_2h_2, \ldots, v_kh_k)$ is a $C$-matching on $\mathrm{IG}(H)$. 
\end{lemma}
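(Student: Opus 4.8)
The plan is to prove both implications at once by reducing the defining condition of each structure to a common form and exhibiting a chain of equivalences, index by index. The key device is a complement identity: writing $B_0=V(H)\setminus\{v_1,\ldots,v_k\}$, the recursion $B_{i}=B_{i-1}\cup\{v_i\}$ gives $B_{i-1}=B_0\cup\{v_1,\ldots,v_{i-1}\}=V(H)\setminus\{v_i,v_{i+1},\ldots,v_k\}$, so that the inclusion ``$\subseteq B_{i-1}$'' appearing in the chronological list becomes a disjointness statement from $\{v_i,\ldots,v_k\}$ after complementation.

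First I would record that the vertices $v_1,\ldots,v_k$ are distinct and that the hyperedges $h_1,\ldots,h_k$ are distinct, so that the list $(v_1h_1,\ldots,v_kh_k)$ really is a matching in $\IG(H)$ and the complement identity above is valid. For a chronological list this is built into the setup, as the $v_i$ are the successively burned vertices and the $h_i$ are the hyperedges witnessing each burn; it is precisely what makes $k=n-|B_0|$. For a $C$-matching it follows from the defining condition: if $v_i=v_j$ with $i<j$, then $v_j\in V_i$ while $v_j=v_i\in h_i$, so $v_j\in h_i\cap V_i$, contradicting $h_i\cap V_i=\emptyset$; the same argument applied to $h_i=h_j$ rules out repeated hyperedges.

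With distinctness in hand, the equivalence reduces to the chain
\begin{align*}
h_i\setminus\{v_i\}\subseteq B_{i-1}
&\iff (h_i\setminus\{v_i\})\cap\{v_i,v_{i+1},\ldots,v_k\}=\emptyset\\
&\iff (h_i\setminus\{v_i\})\cap V_i=\emptyset\\
&\iff h_i\cap V_i=\emptyset,
\end{align*}
where the first equivalence is the complement identity, the second holds because $v_i\notin h_i\setminus\{v_i\}$, and the third because $v_i\notin V_i$ by distinctness. Since this holds for every $i$ with $1\le i\le k$, the full system of inclusions $h_i\setminus\{v_i\}\subseteq B_{i-1}$ defining a chronological list holds if and only if the full system of disjointness conditions $h_i\cap V_i=\emptyset$ defining a $C$-matching holds, which is exactly the claimed equivalence.

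I expect the only real friction to be bookkeeping rather than mathematics: one must fix the convention that $B_0=V(H)\setminus\{v_1,\ldots,v_k\}$ so that the chronological inclusions and the $C$-matching disjointness are genuinely complementary, and one must be careful about the index shift so that $v_i\notin V_i$ --- the single place where distinctness is used to replace $(h_i\setminus\{v_i\})\cap V_i$ by $h_i\cap V_i$. Once these are pinned down, each direction is a one-line complementation and no computation remains.
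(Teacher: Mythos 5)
Your proof is correct and follows essentially the same route as the paper's: both convert the chronological-list inclusion $h_i\setminus\{v_i\}\subseteq B_{i-1}$ into the disjointness condition $h_i\cap V_{i}=\emptyset$ by complementation, index by index. Your explicit check that the $v_i$ and $h_i$ are distinct (so that $v_i\notin V_i$ and the complement identity is valid) is a detail the paper leaves implicit, but it does not change the argument.
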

\begin{proof}
For the forward implication, suppose that $$(\{h_1,v_1\},\{h_2,v_2\}, \ldots, \{h_k,v_k\})$$ is a chronological list of lazy burning on $H$. 
For $ 0\leq i\leq  k $, we define $V_{i} = \{v_{i+1}, v_{i+2}, \ldots, v_{k}\}$ and $B_{i} = V(H) \setminus V_{i}$. 
From the definition of a chronological list, for each $1\leq i\leq k$, $h_i\setminus \{v_i\} \subseteq B_{i-1} = V(H) \setminus V_{i-1}$. 
This is equivalent to $(h_i\setminus \{v_i\})\cap V_{i-1} = \emptyset,$ which itself is equivalent to $h_i \cap V_{i} = \emptyset$. This is what is required to make $(v_1h_1, v_2h_2, \ldots, v_kh_k)$ a $C$-matching of $H$. 

By reversing the chain of implications in the previous paragraph, the proof of the reverse direction follows. 
\end{proof}

Lemmas~\ref{lem:ChronList_equals_Burning} and \ref{lem:ChronList_equals_CMatching} can then be combined to yield the following equality characterizing the lazy burning number of a hypergraph in terms of its order and a maximum cardinality of a $C$-matching. 
For a $C$-matching $M = (v_1h_1, v_2h_2,\dots, v_mh_m)$, define $M_V= \{v_1,v_2,\dots, v_m\}$, which is the set of vertices that occur in an edge of $M$.

\begin{corollary} \label{cor:Burning_equals_CMatching}
Let $H$ be a hypergraph and $M$ be a $C$-matching in $\IG(H)$. We have that $V(H)\setminus M_V$ is a lazy burning set for $H$, and $$m(H) = |V(H)| - b_L(H).$$
\end{corollary}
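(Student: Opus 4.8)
The plan is to derive the first assertion directly from the two preceding lemmas, and then to establish the equality $m(H) = |V(H)| - b_L(H)$ by proving the two matching inequalities. For the first assertion, given any $C$-matching $M = (v_1h_1, \ldots, v_mh_m)$ in $\IG(H)$, Lemma~\ref{lem:ChronList_equals_CMatching} shows that $(\{h_1,v_1\}, \ldots, \{h_m, v_m\})$ is a chronological list of lazy burning on $H$, and then Lemma~\ref{lem:ChronList_equals_Burning} gives that $V(H) \setminus \{v_1, \ldots, v_m\} = V(H) \setminus M_V$ is a lazy burning set. Since distinct edges of a matching use distinct endpoints in $V(H)$, the $v_i$ are pairwise distinct, so $|M_V| = m$.

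For the inequality $m(H) \leq |V(H)| - b_L(H)$, I would take $M$ to be a maximum $C$-matching, so that $|M_V| = m(H)$; the first assertion then produces a lazy burning set of cardinality $|V(H)| - m(H)$, whence $b_L(H) \leq |V(H)| - m(H)$, which rearranges to the desired bound.

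For the reverse inequality $m(H) \geq |V(H)| - b_L(H)$, I would start from an optimal lazy burning set $B_0$ with $|B_0| = b_L(H)$ and run the lazy burning process to completion. For each vertex burned by propagation I would record a hyperedge witnessing its burning, then linearize these $k = |V(H)| - b_L(H)$ vertices into the order $v_1, v_2, \ldots, v_k$ in which they are burned, breaking ties within a round arbitrarily, and let $h_i$ be a hyperedge causing $v_i$ to burn. After checking that $(\{h_1, v_1\}, \ldots, \{h_k, v_k\})$ is a chronological list of lazy burning, Lemma~\ref{lem:ChronList_equals_CMatching} converts it to a $C$-matching of size $k$, giving $m(H) \geq k = |V(H)| - b_L(H)$.

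The crux, and the main obstacle, is verifying the chronological-list condition $h_i \setminus \{v_i\} \subseteq B_{i-1}$ for this linearization, since several vertices may burn in the same propagation round and the chronological order is finer than the round order. The key observation is that if $v_i$ burns in round $r$ via hyperedge $h_i$, then every vertex of $h_i \setminus \{v_i\}$ was already burned by the end of round $r-1$, hence lies in $B_0$ together with vertices burned in strictly earlier rounds; all such vertices precede $v_i$ in the linear order, so $h_i \setminus \{v_i\} \subseteq B_{i-1}$ holds no matter how ties within round $r$ are resolved. Singleton hyperedges, which spontaneously burn, are handled trivially, since then $h_i \setminus \{v_i\} = \emptyset$. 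Combining the two inequalities yields $m(H) = |V(H)| - b_L(H)$.
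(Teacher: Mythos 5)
Your proof is correct and follows essentially the same route as the paper, which simply combines Lemma~\ref{lem:ChronList_equals_Burning} and Lemma~\ref{lem:ChronList_equals_CMatching} without further detail. Your explicit construction of a chronological list from an optimal lazy burning process (linearizing by burning round and checking $h_i\setminus\{v_i\}\subseteq B_{i-1}$) correctly supplies the converse direction that the paper leaves implicit, so nothing is missing.
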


Define the \emph{dual} of a hypergraph $H=(V,E)$ to be the hypergraph $H^*$ with vertex set $E$ and with hyperedge set $\{N(v) : v \in V\}$, where we use $N(v)$ to denote the set of hyperedges in $E$ that contain vertex $v$. 
The bipartite graph formed by dropping the order of the parts of $\mathrm{IG}(H)$ is the same as that from dropping the order of the parts of $\mathrm{IG}(H^*)$. 

Given a $C$-matching $M=(v_1h_1, v_2h_2, \ldots, v_kh_k)$ in $\mathrm{IG}(H)$, the \emph{retrograde of $M$}, written $M^R$, is the list of edges of $M$ in reverse order and with the ordering of the vertices in each edge swapped; that is, $(h_kv_k, h_{k-1}v_{k-1}, \ldots, h_1v_1)$.  The retrograde of $M$ is an ordered set of ordered edges of $\IG(H^*)$. 
We will show that this retrograde is a $C$-matching of $\IG(H^*)$. 

\begin{lemma} \label{lem:retrograde_dual}
Let $H=(V,E)$ be a hypergraph. 
For a $C$-matching $M$ in a bipartite graph $\mathrm{IG}(H)$, the retrograde of $M$ is a $C$-matching in the bipartite graph $\mathrm{IG}(H^*)$. 
\end{lemma}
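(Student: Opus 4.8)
The plan is to unwind both occurrences of the $C$-matching definition and show that they encode exactly the same combinatorial constraint on the pairs $(v_i,h_i)$, so that the statement collapses to a bookkeeping identity. First I would fix the correspondence between the two incidence graphs. Since $H^*=(E,\{N(v):v\in V\})$ and $N(v)$ is the set of hyperedges containing $v$, an edge $vh$ of $\IG(H)$ (so $v\in h$) corresponds exactly to the edge of $\IG(H^*)$ joining the vertex $h\in E$ to the hyperedge $N(v)$, because $h\in N(v)\iff v\in h$. In particular each retrograde entry $h_iv_i$ of $M^R$ is a legitimate edge of $\IG(H^*)$, with $h_i$ playing the role of a vertex and $N(v_i)$ the role of a hyperedge.

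Second, I would write out the $C$-matching condition for $M^R$ and re-index it back to the original labels. Writing $M^R=(w_1g_1,\ldots,w_kg_k)$ with $w_p=h_{k+1-p}$ a vertex of $H^*$ and $g_p=N(v_{k+1-p})$ a hyperedge of $H^*$, the defining condition $g_p\cap W_p=\emptyset$ with $W_p=\{w_{p+1},\ldots,w_k\}=\{h_1,\ldots,h_{k-p}\}$ becomes $N(v_{k+1-p})\cap\{h_1,\ldots,h_{k-p}\}=\emptyset$. Using $h_l\in N(v)\iff v\in h_l$ and setting $i=k+1-p$ (so that $k-p=i-1$), this is precisely the statement that $v_i\notin h_l$ for every $l<i$.

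Third, I would compare this with the condition defining $M$ as a $C$-matching of $\IG(H)$, namely $h_i\cap V_i=\emptyset$ with $V_i=\{v_{i+1},\ldots,v_k\}$, which says $v_l\notin h_i$ for every $l>i$. Quantified over all indices, both conditions assert exactly ``$v_b\notin h_a$ for all pairs of indices $a<b$'', so they are literally the same family of constraints. Hence $M$ is a $C$-matching of $\IG(H)$ if and only if $M^R$ is a $C$-matching of $\IG(H^*)$, which gives the forward direction the lemma requires (and a little more). As a byproduct, distinctness of the $w_p$ and of the $g_p$ — the matching property — is forced by the $C$-matching condition itself, exactly as in the remark following the definition, so it need not be verified separately.

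The main obstacle is purely organizational rather than mathematical: the retrograde simultaneously reverses the order of the list \emph{and} swaps the two coordinates of each edge, while passing to the dual swaps the two sides of the bipartition (vertices $\leftrightarrow$ hyperedges). Keeping straight which object counts as a ``vertex'' and which as a ``hyperedge'' in $\IG(H^*)$, and correctly matching the reversed index $p=k+1-i$ against the tail set $W_p$, is where a sign or off-by-one slip is easy to make. Once the re-indexing is pinned down, the equivalence of the two constraints is immediate from the antisymmetric form ``$v_b\notin h_a$ for $a<b$'' that underlies both.
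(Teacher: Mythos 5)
Your proof is correct and follows essentially the same route as the paper's: both hinge on the observation that $h\in N(v)\iff v\in h$ together with the re-indexing $p=k+1-i$, which shows the $C$-matching conditions for $M$ and $M^R$ each amount to ``$v_j\notin h_i$ for all $i<j$.'' The paper packages this as a proof by contradiction while you verify the equivalence directly, but the underlying argument is identical.
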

\begin{proof}
Let $M=(v_1h_1, v_2h_2, \ldots, v_mh_m)$ be a $C$-matching of the bipartite graph $(V\cup E,I)$, and define $V_i = \{v_{i+1}, v_{i+2}, \ldots, v_{m}\}$ and  $E_i = \{h_1, h_2, \ldots, h_{i-1}\}$. 
By the definition of incidence graphs and $C$-matchings, we have $h_i \cap V_{i}=\emptyset$. 
For contradiction, we suppose that the retrograde of $M$, $(h_mv_m, h_{m-1}v_{m-1}, \ldots, h_1v_1)$, is not a $C$-matching of the bipartite graph $\IG(H^*)=(E\cup V,\overline{I})$. 
Note that this implies that there is some $j$ such that $N(v_j) \cap E_{j} \neq \emptyset$, and so there exists some $i \leq j-1$ such that $h_i \in N(v_j)$.
However, this implies that $v_j \in h_i$. 
Since $j \geq i+1$, we also have that $v_j \in V_{i}$. 
It then follows that $v_j \in h_i \cap V_{i}$, but this contradicts the fact that $M$ is a $C$-matching. 
This completes the proof. 
\end{proof}

Lemma~\ref{lem:retrograde_dual} immediately yields the following result. 

\begin{corollary} \label{cor:CMatching_Duals}
    For a hypergraph $H$, $m(H) = m(H^*)$.
\end{corollary}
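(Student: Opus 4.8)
The plan is to derive Corollary~\ref{cor:CMatching_Duals} directly from Lemma~\ref{lem:retrograde_dual} by observing that the retrograde operation is a bijection between $C$-matchings of $\IG(H)$ and $C$-matchings of $\IG(H^*)$, and that it preserves cardinality. First I would note that the retrograde map $M \mapsto M^R$ sends a $C$-matching of $\IG(H)$ to a $C$-matching of $\IG(H^*)$ by Lemma~\ref{lem:retrograde_dual}, and that it clearly preserves the length of the list: if $M = (v_1h_1, \ldots, v_mh_m)$ has $m$ edges, then $M^R = (h_mv_m, \ldots, h_1v_1)$ also has $m$ edges.

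Second, I would establish that the retrograde map is an involution, so that it is a bijection. Applying the retrograde twice reverses the list back to its original order and swaps the vertex orderings back, recovering $M$; that is, $(M^R)^R = M$. Since the dual of the dual satisfies $(H^*)^* = H$ (the bipartite graph underlying $\IG(H)$ and $\IG(H^*)$ is the same up to the ordering of parts, as noted just before the lemma), applying Lemma~\ref{lem:retrograde_dual} to $H^*$ shows that the retrograde of any $C$-matching of $\IG(H^*)$ is a $C$-matching of $\IG(H)$. Combined with the involution property, this gives a cardinality-preserving bijection between the two sets of $C$-matchings.

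Finally, since the retrograde is a bijection between $C$-matchings of $\IG(H)$ and $C$-matchings of $\IG(H^*)$ that preserves the number of edges, it maps a maximum $C$-matching to a $C$-matching of the same cardinality and vice versa. Therefore the maximum cardinalities coincide, giving $m(H) = m(H^*)$.

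I do not anticipate a genuine obstacle here, since the substantive content is already contained in Lemma~\ref{lem:retrograde_dual}; the remaining work is purely bookkeeping. The only point requiring a little care is making explicit that $(M^R)^R = M$ and that the roles of $H$ and $H^*$ are symmetric, so that the retrograde genuinely furnishes a two-sided inverse rather than merely a map in one direction. Once that symmetry is spelled out, the equality of maxima is immediate.
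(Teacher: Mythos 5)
Your proposal is correct and matches the paper's intent: the paper derives this corollary directly from Lemma~\ref{lem:retrograde_dual} as an immediate consequence, and your argument simply makes explicit the standard bookkeeping (the retrograde is a cardinality-preserving involution, so it gives inequalities in both directions). No gaps.
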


By combining Corollary~\ref{cor:Burning_equals_CMatching} and Corollary~\ref{cor:CMatching_Duals}, we derive the following. 
\begin{corollary}\label{cor1}
For a hypergraph $H$, $|V(H)| - b_L(H) = |V(H^*)| - b_L(H^*)$.
\end{corollary}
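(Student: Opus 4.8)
The plan is to combine the two preceding corollaries into a short chain of equalities. By Corollary~\ref{cor:Burning_equals_CMatching} applied to $H$ itself, I have $m(H) = |V(H)| - b_L(H)$, which recasts the left-hand side of the desired identity as a statement purely about the maximum $C$-matching cardinality of $\IG(H)$.

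Next I would observe that Corollary~\ref{cor:Burning_equals_CMatching} is stated for an arbitrary hypergraph, and since the dual $H^*=(E,\{N(v):v\in V\})$ is itself a hypergraph, the corollary applies verbatim to it. This yields $m(H^*) = |V(H^*)| - b_L(H^*)$, converting the right-hand side of the desired identity into the maximum $C$-matching cardinality of $\IG(H^*)$.

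Finally, Corollary~\ref{cor:CMatching_Duals} supplies the bridge $m(H) = m(H^*)$, which was itself obtained via the retrograde bijection of Lemma~\ref{lem:retrograde_dual}. Stringing these facts together gives
$$|V(H)| - b_L(H) = m(H) = m(H^*) = |V(H^*)| - b_L(H^*),$$
as required.

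I expect no genuine obstacle here, since the substantive combinatorics was already carried out in Lemma~\ref{lem:retrograde_dual} and Corollary~\ref{cor:CMatching_Duals}; this final statement is essentially a bookkeeping consequence. The only points warranting care are confirming that Corollary~\ref{cor:Burning_equals_CMatching} may legitimately be invoked for the dual hypergraph $H^*$ (which it can, being stated for an arbitrary hypergraph), and keeping straight that $|V(H^*)| = |E(H)|$ so that no symbol is misidentified along the chain of equalities.
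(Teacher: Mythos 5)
Your proposal matches the paper's own derivation exactly: the paper obtains Corollary~\ref{cor1} by combining Corollary~\ref{cor:Burning_equals_CMatching} (applied to both $H$ and $H^*$) with Corollary~\ref{cor:CMatching_Duals}, which is precisely your chain $|V(H)| - b_L(H) = m(H) = m(H^*) = |V(H^*)| - b_L(H^*)$. The argument is correct and complete.
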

Alternatively, we may describe the lazy burning number of the dual hypergraph as 
    $ b_L(H^*)  = |E(H)| - |V(H)| + b_L(H)$.

\subsection{Lower Bounds}
Consider a $C$-matching $(v_1h_1, v_2h_2, \ldots, v_k h_k)$ in $\IG(H)$. 
The vertices in $\bigcup_{j \leq i} h_j$ must contain only vertices that are either in the lazy burning set for the corresponding lazy burning process or in $\{v_1, v_2, \ldots, v_i\}$. 
As such, describing the cardinalities of the hyperedges and the size of the intersection of hyperedges yields lower bounds on the lazy burning number. 

\begin{theorem}\label{thm:general_lazy_lowerbound}
    Let $H$ be a hypergraph such that each pair of hyperedges intersects in at most $\overline{\lambda}$ vertices. 
    For $t$ a positive integer, define $\overline{D}_t$ as the minimum sum of the cardinalities of $t$ distinct hyperedges in $H$.  
    For every $t$ with $1 \leq t \leq m(H)$, we have that 
    \[
    b_L(H) \geq 
    \overline{D}_t - \overline{\lambda} \binom{t}{2} - t.
    \]
\end{theorem}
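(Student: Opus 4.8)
The plan is to extract a maximum $C$-matching and exploit the containment sketched just before the statement. Fix a maximum $C$-matching $M = (v_1h_1, v_2h_2, \ldots, v_mh_m)$ in $\IG(H)$, so that $m = m(H)$ and, by Corollary~\ref{cor:Burning_equals_CMatching}, the set $S = V(H)\setminus\{v_1,\ldots,v_m\}$ is an optimal lazy burning set with $|S| = |V(H)| - m = b_L(H)$. Now fix $t$ with $1\le t\le m$ and restrict attention to the first $t$ edges $v_1h_1,\ldots,v_th_t$. I would first record that the hyperedges $h_1,\ldots,h_t$ are pairwise distinct: if $h_i=h_j$ with $i<j$, then $v_j\in h_j=h_i$ while $v_j\in V_i$, contradicting $h_i\cap V_i=\emptyset$.

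The core observation is that $\bigcup_{j=1}^t h_j \subseteq S\cup\{v_1,\ldots,v_t\}$. Indeed, for each $j\le t$ the defining condition $h_j\cap V_j=\emptyset$ says that $h_j$ avoids $v_{j+1},\ldots,v_m$, so every vertex of $h_j$ lies either in $S$ or in $\{v_1,\ldots,v_j\}\subseteq\{v_1,\ldots,v_t\}$. Taking cardinalities gives the upper bound
\[
\Big|\bigcup_{j=1}^t h_j\Big| \le |S| + t = b_L(H) + t.
\]

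For a matching lower bound on the same union I would apply the second Bonferroni inequality,
\[
\Big|\bigcup_{j=1}^t h_j\Big| \ge \sum_{j=1}^t |h_j| - \sum_{1\le i<j\le t} |h_i\cap h_j|.
\]
Since the $h_j$ are $t$ distinct hyperedges, $\sum_{j=1}^t |h_j|\ge \overline{D}_t$ by the definition of $\overline{D}_t$; and since every pair of hyperedges meets in at most $\overline{\lambda}$ vertices, $\sum_{1\le i<j\le t}|h_i\cap h_j|\le \overline{\lambda}\binom{t}{2}$. Hence $\big|\bigcup_{j=1}^t h_j\big| \ge \overline{D}_t - \overline{\lambda}\binom{t}{2}$. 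Chaining this with the upper bound yields $b_L(H)+t \ge \overline{D}_t - \overline{\lambda}\binom{t}{2}$, which rearranges to the claimed inequality.

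The argument is essentially a union bound, so there is no deep obstacle; the only points requiring care are verifying that the first $t$ matched hyperedges are genuinely distinct (so that $\overline{D}_t$ applies) and confirming the direction of the Bonferroni estimate, namely that truncating inclusion--exclusion after the pairwise-intersection term underestimates the size of a union. The hypothesis $t\le m(H)$ is exactly what guarantees that a $C$-matching with at least $t$ edges exists, so that the first $t$ edges can always be selected.
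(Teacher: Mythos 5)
Your proof is correct and follows essentially the same route as the paper: both extract a maximum $C$-matching, use the condition $h_j\cap V_j=\emptyset$ to show $\bigcup_{j\le t}h_j$ lies in the burning set together with $\{v_1,\ldots,v_t\}$, and bound $\bigl|\bigcup_{j\le t}h_j\bigr|$ below by $\overline{D}_t-\overline{\lambda}\binom{t}{2}$ (the paper's telescoping disjointification $h'_i=h_i\setminus\bigcup_{j<i}h_j$ is just your Bonferroni estimate written out). Your explicit check that $h_1,\ldots,h_t$ are pairwise distinct, so that $\overline{D}_t$ genuinely applies, is a small point the paper leaves implicit and is worth having.
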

\begin{proof}
Suppose $B$ is an optimal burning set for $H$. 
As seen in Corollary~\ref{cor:Burning_equals_CMatching}, we may define a $C$-matching in $\mathrm{IG}(H)$, say $M=(v_1h_1, v_2h_2, \ldots, v_mh_m)$, where $m = m(H)$ and $B = V(H) \setminus M_V$. 
 
Define $V_{i} = \{v_{i+1}, v_{i+2}, \dots, v_m\}$ for $1\leq i\leq m$. 
By the definition of a $C$-matching, we have that $$h_i \cap V_{i}=\emptyset.$$ 
Fix $t$ for $1\leq t\leq m$. 
The set $H_t=\left( \bigcup_{1 \leq j \leq t} h_j\right) \setminus\{v_1, v_2,\ldots, v_t\}$ satisfies $H_t\cap \{v_1, v_2,\ldots, v_m\} = \emptyset$ and, therefore, $H_t\subseteq B$. 
For $1\leq i\leq t$, define the set $h'_i = h_i \setminus \bigcup_{1 \leq j < i} h_{j}$, so that  $H'_t=\bigcup_{1 \leq j \leq t} h'_{j}$ is a union of disjoint sets.
Since each pair of hyperedges may intersect for at most $\overline{\lambda}$ vertices, we have that $|H'_t|\geq \sum_{j=1}^{t} (|h_{j}| - \overline{\lambda} (j-1))$. 
Also, $H_t = H'_t\setminus \{v_1, v_2, \ldots, v_m\} \subseteq H'_t\setminus \{v_1, v_2, \ldots, v_t\}$ and, thus, we have  
\begin{eqnarray*}
|H_t|   &\geq   & \left( \sum_{j=1}^{t} (|h_j| - \overline{\lambda} (j-1)) \right) - t \\
        &=      & \left( \sum_{j=1}^{t} |h_j|\right) - \overline{\lambda} \binom{t}{2} - t\\
        &\geq   & 
        \overline{D}_t
        - \overline{\lambda} \binom{t}{2} - t.
\end{eqnarray*}
    Since $H_t \subseteq B$, the proof is complete. 
\end{proof}

By Corollary~\ref{cor:Burning_equals_CMatching} and Corollary~\ref{cor:CMatching_Duals}, we may consider a dual argument that swaps the roles of the vertices and hyperedges in Theorem~\ref{thm:general_lazy_lowerbound}, which yields the following corollary.

\begin{corollary}
Let $H$ be a hypergraph such that each pair of vertices occurs together in at most $\lambda$ hyperedges and 
suppose that the sum of the degrees of the $t$ smallest-degree vertices is $D_t$.
For each $t$ with $1 \leq t \leq m(H)$, 
    \[
    b_L(H) \geq |V(H)| -|E(H)| + D_t - \lambda \binom{t}{2} - t.
    \]
\end{corollary}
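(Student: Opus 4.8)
The plan is to apply Theorem~\ref{thm:general_lazy_lowerbound} directly to the dual hypergraph $H^*$ and then translate the resulting bound back to $H$ using the duality relations in Corollary~\ref{cor:CMatching_Duals} and Corollary~\ref{cor1}. First I would unpack the parameters of $H^*$ that appear in Theorem~\ref{thm:general_lazy_lowerbound}. Recall that $V(H^*) = E(H)$ and that each hyperedge of $H^*$ has the form $N(v)$ for some $v \in V(H)$, where $N(v)$ is the set of hyperedges of $H$ containing $v$. Hence $|N(v)| = \deg_H(v)$, so the cardinality of a hyperedge of $H^*$ is precisely the degree of the corresponding vertex of $H$.

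Next I would identify the quantities $\overline{\lambda}$ and $\overline{D}_t$ of Theorem~\ref{thm:general_lazy_lowerbound} when that theorem is applied to $H^*$. Two hyperedges $N(u)$ and $N(v)$ of $H^*$ intersect in exactly the set of hyperedges of $H$ containing both $u$ and $v$, whose size is the number of hyperedges through the pair $\{u,v\}$. By hypothesis this is at most $\lambda$, so the parameter $\overline{\lambda}$ for $H^*$ may be taken to equal $\lambda$. Likewise, since the cardinalities of the hyperedges of $H^*$ are exactly the vertex degrees of $H$, the minimum sum of cardinalities of $t$ distinct hyperedges of $H^*$ equals the minimum sum of $t$ vertex degrees in $H$, namely $D_t$; thus $\overline{D}_t$ for $H^*$ equals $D_t$.

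With these identifications, Theorem~\ref{thm:general_lazy_lowerbound} applied to $H^*$ gives, for every $t$ with $1 \leq t \leq m(H^*)$, the inequality $b_L(H^*) \geq D_t - \lambda\binom{t}{2} - t$. By Corollary~\ref{cor:CMatching_Duals} we have $m(H^*) = m(H)$, so the admissible range of $t$ is unchanged. Finally, I would substitute the dual formula $b_L(H^*) = |E(H)| - |V(H)| + b_L(H)$ from Corollary~\ref{cor1}, rearrange to isolate $b_L(H)$, and obtain the stated bound. The only step demanding genuine care is the parameter translation in the middle paragraph: confirming that pairwise hyperedge intersections in $H^*$ correspond to pairwise vertex co-occurrences in $H$, and that hyperedge cardinalities in $H^*$ are vertex degrees in $H$. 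Once these correspondences are verified, the inequality is immediate, and no further calculation beyond the rearrangement is needed.
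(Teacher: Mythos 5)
Your proposal is correct and matches the paper's intended argument, which is exactly to apply Theorem~\ref{thm:general_lazy_lowerbound} to the dual hypergraph $H^*$ and translate back via $m(H^*)=m(H)$ and $b_L(H^*)=|E(H)|-|V(H)|+b_L(H)$. Your careful identification of $\overline{\lambda}$ and $\overline{D}_t$ for $H^*$ with $\lambda$ and $D_t$ for $H$ supplies the details the paper leaves implicit.
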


If the graph has bounds placed on these parameters, then we may provide a more descriptive outcome.

\begin{theorem} \label{thm:BurningLowerUniformity}
    Let $H$ be a hypergraph such that each pair of hyperedges intersects in at most $\overline{\lambda}$ vertices, each hyperedge has cardinality at least $r$, and there are at least $\lceil \frac{r-1}{\overline{\lambda}} + \frac{1}{2} \rceil$ vertices. 
    We then have that
    $$b_L(H) \geq \frac{(r-1)^2}{2\overline{\lambda}} + \frac{r -1}{2} - \frac{3\overline{\lambda}}{8}.$$
\end{theorem}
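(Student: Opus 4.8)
The plan is to specialize Theorem~\ref{thm:general_lazy_lowerbound} and then optimize over the free parameter $t$. The only feature of the size hypothesis I would use is that, since every hyperedge has at least $r$ vertices, the minimum sum of $t$ distinct hyperedge cardinalities satisfies $\overline{D}_t \ge tr$. Feeding this into Theorem~\ref{thm:general_lazy_lowerbound} gives, for every admissible $t$,
\[
b_L(H) \;\ge\; tr - \overline{\lambda}\binom{t}{2} - t \;=\; t(r-1) - \frac{\overline{\lambda}}{2}\,t(t-1).
\]
Call the right-hand side $g(t)$; the task is now to choose $t$ making $g(t)$ as large as possible.

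I would treat $g$ as a function of a real variable. Writing $g(t) = -\frac{\overline{\lambda}}{2}t^2 + \bigl(r-1+\frac{\overline{\lambda}}{2}\bigr)t$ exhibits it as a downward parabola with constant second derivative $-\overline{\lambda}$ and vertex at $t^* = \frac{r-1}{\overline{\lambda}} + \frac12$; a short computation then gives $g(t^*) = \frac{(r-1)^2}{2\overline{\lambda}} + \frac{r-1}{2} + \frac{\overline{\lambda}}{8}$. Since $t$ must be a positive integer, I would take $t = \lceil t^*\rceil$, the quantity occurring in the hypothesis. Because $g$ is quadratic with vertex $t^*$, we have the exact identity $g(\lceil t^*\rceil) = g(t^*) - \frac{\overline{\lambda}}{2}\bigl(\lceil t^*\rceil - t^*\bigr)^2$, and as $0 \le \lceil t^*\rceil - t^* < 1$ this is at least $g(t^*) - \frac{\overline{\lambda}}{2}$. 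Substituting the value of $g(t^*)$ produces exactly $\frac{(r-1)^2}{2\overline{\lambda}} + \frac{r-1}{2} - \frac{3\overline{\lambda}}{8}$, the claimed bound. Note that either integer neighbour of $t^*$ gives the same final constant, since each lies within distance $1$ of the vertex; this is precisely the origin of the $\frac{\overline{\lambda}}{2}$ slack.

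The step I expect to be the main obstacle is checking that the chosen value is admissible in Theorem~\ref{thm:general_lazy_lowerbound}, that is, that $\lceil t^*\rceil \le m(H)$, which is where the hypothesis on the number of vertices enters. I would try to exhibit a $C$-matching of the required length directly, building it greedily and choosing at step $i$ a vertex $v_i \in h_i \setminus (h_1 \cup \cdots \cup h_{i-1})$. The intersection bound guarantees $|h_i \setminus (h_1 \cup \cdots \cup h_{i-1})| \ge r - (i-1)\overline{\lambda}$, which stays positive precisely while $i-1 \le \frac{r-1}{\overline{\lambda}}$, so the construction runs far enough to reach the desired length; the flexibility to use $\lfloor t^*\rfloor$ in place of $\lceil t^*\rceil$ helps here, since the smaller choice is the easier one to realize. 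Reconciling this length with the stated vertex count, and, should $m(H)$ ever fall short, falling back on the identity $b_L(H) = |V(H)| - m(H)$ to recover the bound directly, is the delicate bookkeeping. Once admissibility is secured, the two displayed inequalities complete the argument.
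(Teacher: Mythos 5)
Your argument is essentially the paper's: it specializes Theorem~\ref{thm:general_lazy_lowerbound} via $\overline{D}_t \ge tr$, maximizes the resulting downward parabola at $t^* = \frac{r-1}{\overline{\lambda}}+\frac{1}{2}$, and rounds to a neighbouring integer at a cost of at most $\frac{\overline{\lambda}}{2}$, which yields exactly the stated constant. Your additional care about the admissibility condition $t \le m(H)$ (via a greedy $C$-matching) goes beyond the paper, which at that step simply appeals to the vertex-count hypothesis; this is a point in your favour rather than a divergence of method.
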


\begin{proof}
From Theorem~\ref{thm:general_lazy_lowerbound}, we have that $b_L(H) \geq rt -  \overline{\lambda}\binom{t}{2} - t$ for $1\leq t\leq m(H)$. 
We may treat this lower bound as an integer-valued function $f(t)$ and then extend it to a real-valued function $f(x)$, where $x\in\mathbb{R}$. 
The function $f$ is maximized when the derivative is zero or at the endpoints of its domain. 
This occurs when $x=\frac{r-1}{\overline{\lambda}} + \frac{1}{2},$ since we have assumed that there are at least $\lceil \frac{r-1}{\overline{\lambda}} + \frac{1}{2} \rceil$ many vertices. 
As $t\in \mathbb{Z}$, the maximum must then occur at either $\lceil \frac{r-1}{\overline{\lambda}} + \frac{1}{2} \rceil$ or $\lfloor \frac{r-1}{\overline{\lambda}} + \frac{1}{2} \rfloor$. 
We may therefore assume that $t$ has the form $t = \frac{r-1}{\overline{\lambda}} + \frac{1}{2}+\varepsilon$ for some $-1 < \varepsilon < 1$. 
Substituting this value back in to $rt - \overline{\lambda}\binom{t}{2} - t$ yields $\frac{(r-1+\overline{\lambda}/2)^2}{2\overline{\lambda}} - \frac{\varepsilon^2\overline{\lambda}}{2}$. 
We thus find that $b_L(H) \geq \frac{(r-1+\overline{\lambda}/2)^2}{2\overline{\lambda}}-\frac{\overline{\lambda}}{2}$, from which the result follows. 
\end{proof}

We may again make use of Corollary~\ref{cor:Burning_equals_CMatching} to obtain a dual result. 

\begin{corollary}\label{app}
    Let $H$ be a hypergraph such that each pair of vertices occurs together in at most $\lambda$ hyperedges. Suppose that the minimum degree of the vertices is $\delta$, and $H$ has at least $\lceil \frac{\delta-1} {\lambda} +\frac{1}{2} \rceil$ hyperedges. We have that  
    \[
    b_L(H) \geq |V(H)| -|E(H)| +  \frac{\delta-1}{2 \lambda} - \frac{\delta-1}{2} - \frac{3\lambda}{8}.
    \]
\end{corollary}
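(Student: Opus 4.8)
The plan is to derive Corollary~\ref{app} as the exact dual of Theorem~\ref{thm:BurningLowerUniformity}, exactly as the surrounding text anticipates. The engine is the duality identity recorded just after Corollary~\ref{cor1}, namely $b_L(H^*) = |E(H)| - |V(H)| + b_L(H)$, which rearranges to $b_L(H) = |V(H)| - |E(H)| + b_L(H^*)$. So once Theorem~\ref{thm:BurningLowerUniformity} furnishes a lower bound on $b_L(H^*)$, substituting it into this identity produces the $|V(H)| - |E(H)| + (\cdots)$ form of the claimed inequality with no further work.

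First I would set up the translation dictionary between $H$ and $H^*$. The vertices of $H^*$ are the hyperedges of $H$, and its hyperedges are the sets $N(v)$ for $v \in V(H)$, with $|N(v)| = \deg_H(v)$. Two hyperedges $N(u)$ and $N(v)$ of $H^*$ meet in precisely the set of hyperedges of $H$ containing both $u$ and $v$, so $|N(u) \cap N(v)|$ equals the codegree of the pair $\{u,v\}$ in $H$. Under this correspondence the three hypotheses of Theorem~\ref{thm:BurningLowerUniformity} applied to $H^*$ read back into the stated hypotheses on $H$: every pair of hyperedges of $H^*$ meeting in at most $\overline{\lambda}$ vertices becomes ``every pair of vertices of $H$ lies in at most $\lambda$ common hyperedges,'' so $\overline{\lambda} = \lambda$; each hyperedge of $H^*$ having cardinality at least $r$ becomes the minimum-degree condition, so $r = \delta$; and the requirement that $H^*$ have at least $\lceil \frac{r-1}{\overline{\lambda}} + \frac{1}{2}\rceil$ vertices becomes the assumption that $H$ have at least $\lceil \frac{\delta-1}{\lambda} + \frac{1}{2}\rceil$ hyperedges.

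With the dictionary in place, Theorem~\ref{thm:BurningLowerUniformity} applied to $H^*$ yields
\[
b_L(H^*) \geq \frac{(\delta-1)^2}{2\lambda} + \frac{\delta-1}{2} - \frac{3\lambda}{8},
\]
and feeding this into $b_L(H) = |V(H)| - |E(H)| + b_L(H^*)$ gives the bound, with the dualized lower term carried over verbatim from the theorem (the leading term being $\tfrac{(\delta-1)^2}{2\lambda}$ together with $+\tfrac{\delta-1}{2}$). The hard part here is not any computation—the arithmetic is inherited wholesale from Theorem~\ref{thm:BurningLowerUniformity}—but rather confirming that each hypothesis translates cleanly under dualization, and in particular that the pairwise intersections of the hyperedges $N(v)$ genuinely compute pairwise codegrees in $H$ and that the minimum degree $\delta$ of $H$ is exactly the minimum hyperedge cardinality of $H^*$. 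Once that correspondence is checked, the result is immediate.
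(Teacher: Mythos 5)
Your proof takes exactly the paper's route: the corollary is obtained by applying Theorem~\ref{thm:BurningLowerUniformity} to the dual hypergraph $H^*$ and substituting into the identity $b_L(H)=|V(H)|-|E(H)|+b_L(H^*)$ from Corollary~\ref{cor1}, and your translation of the hypotheses under dualization (pairwise codegree $\leq\lambda$, minimum degree $\delta$ as minimum hyperedge cardinality, number of hyperedges as number of dual vertices) is correct. The only caveat is that the bound you derive, $|V(H)|-|E(H)|+\frac{(\delta-1)^2}{2\lambda}+\frac{\delta-1}{2}-\frac{3\lambda}{8}$, is not literally the displayed inequality, whose middle terms read $\frac{\delta-1}{2\lambda}-\frac{\delta-1}{2}$; the printed form appears to be a typographical corruption of the true dual, and since your bound dominates it for every $\delta\geq 1$, the corollary as stated still follows from your argument.
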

In the case that the hypergraph is linear, meaning $\overline{\lambda}=1$, we may repeat the proof of Theorem~\ref{thm:BurningLowerUniformity}, but where we find that the function is maximized with either $t=r-1$ or $t=r$. This gives the following result. 

\begin{theorem}
    If $H$ is a linear hypergraph such that each hyperedge has cardinality at least $r$ and there are at least $r$ hyperedges, 
    then 
    $$b_L(H) \geq \binom{r}{2}.$$
\end{theorem}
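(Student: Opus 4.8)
The plan is to specialize Theorem~\ref{thm:general_lazy_lowerbound} to the linear case $\overline{\lambda}=1$ and to evaluate the resulting bound at $t=r$. Because every hyperedge has cardinality at least $r$, the minimum sum $\overline{D}_t$ of $t$ distinct hyperedge cardinalities satisfies $\overline{D}_t \geq rt$, so for every admissible $t$ the theorem yields $b_L(H) \geq rt - \binom{t}{2} - t$. Regarded as a real quadratic in $t$, this expression peaks at $t = r - \tfrac12$, so among integers it is largest at $t=r-1$ and $t=r$; a direct computation shows both values equal $\binom{r}{2}$. Hence the whole argument reduces to justifying the substitution $t=r$, after which $b_L(H) \geq \overline{D}_r - \binom{r}{2} - r \geq r^2 - \binom{r}{2} - r = \binom{r}{2}$.

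The step I expect to be the crux is verifying that $t=r$ is admissible, that is, that $m(H) \geq r$; this is the linear analogue of the vertex-count hypothesis used in Theorem~\ref{thm:BurningLowerUniformity}, and it is precisely where the assumption of at least $r$ hyperedges is consumed. I would prove $m(H)\geq r$ by constructing a $C$-matching of length $r$ in $\IG(H)$ greedily. After fixing distinct hyperedges $h_1,\dots,h_r$, I would select, for $i=1,\dots,r$ in order, a vertex $v_i \in h_i \setminus (h_1 \cup \cdots \cup h_{i-1})$; this residual set is nonempty because linearity forces $|h_i \cap h_j| \leq 1$ for $j<i$, so at most $i-1 \leq r-1$ of the at least $r$ vertices of $h_i$ are excluded, leaving at least $r-(i-1)\geq 1$ choices.

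Finally, I would confirm that $(v_1h_1,\dots,v_rh_r)$ really is a $C$-matching by checking the defining condition $h_i \cap \{v_{i+1},\dots,v_r\} = \emptyset$: for $j>i$ we have $i \leq j-1$, so $h_i \subseteq h_1 \cup \cdots \cup h_{j-1}$, whence $v_j \notin h_i$ by the choice of $v_j$; distinctness of the $v_i$, and hence the matching property, follows from the same construction. With $m(H)\geq r$ in hand, the substitution $t=r$ above is valid and delivers $b_L(H)\geq \binom{r}{2}$. The only real work is the greedy construction; the optimization of the lower bound and the arithmetic yielding $\binom{r}{2}$ are routine, and by Corollary~\ref{cor:Burning_equals_CMatching} one could equivalently phrase the admissibility of $t=r$ directly in terms of $b_L(H)$ and $|V(H)|$.
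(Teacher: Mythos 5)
Your proof is correct and follows essentially the same route as the paper: the paper likewise obtains this result by specializing Theorem~\ref{thm:general_lazy_lowerbound} to $\overline{\lambda}=1$ and noting that $rt-\binom{t}{2}-t$ is maximized over integers at $t\in\{r-1,r\}$, where it equals $\binom{r}{2}$. The one point where you go beyond the paper is the explicit greedy construction of a length-$r$ $C$-matching certifying $m(H)\geq r$; the paper leaves this admissibility check implicit in the hypothesis of at least $r$ hyperedges, and your verification correctly fills that gap.
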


Corollary~\ref{cor:Burning_equals_CMatching} again can be used when deriving the corresponding dual result. 

\begin{corollary}\label{cor:delta-delta}
If $H$ is a linear hypergraph with minimum degree $\delta$, and there are at least $\delta$ hyperedges, then 
    $$ 
    b_L(H) \geq |V(H)| -|E(H)| + \binom{\delta}{2}.$$
\end{corollary}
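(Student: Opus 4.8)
The plan is to derive Corollary~\ref{cor:delta-delta} as the dual of the immediately preceding theorem, exactly as the excerpt signals with the phrase ``Corollary~\ref{cor:Burning_equals_CMatching} again can be used.'' The strategy rests on the duality machinery already established: Corollary~\ref{cor1} gives $|V(H)| - b_L(H) = |V(H^*)| - b_L(H^*)$, so a lower bound on $b_L(H^*)$ translates directly into a lower bound on $b_L(H)$ after accounting for the shift by $|E(H)| - |V(H)|$.

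First I would observe that the relevant preceding theorem states that for a linear hypergraph in which every hyperedge has cardinality at least $r$ and there are at least $r$ hyperedges, $b_L \geq \binom{r}{2}$. The key step is to interpret the hypotheses of Corollary~\ref{cor:delta-delta} in terms of the dual $H^*$. Recall that $V(H^*) = E(H)$ and the hyperedges of $H^*$ are $\{N(v) : v \in V(H)\}$, so the \emph{degree} of a vertex $v$ in $H$ equals the \emph{cardinality} of the corresponding hyperedge $N(v)$ in $H^*$. Thus the assumption that $H$ has minimum degree $\delta$ says precisely that every hyperedge of $H^*$ has cardinality at least $\delta$, which supplies the ``cardinality at least $r$'' hypothesis with $r = \delta$. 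Similarly, the assumption that $H$ has at least $\delta$ hyperedges says that $H^*$ has at least $\delta$ vertices; and ``at least $r$ hyperedges'' in $H^*$ corresponds to ``at least $\delta$ vertices'' in $H$, so one must confirm the hypothesis count matches. I would also need to verify that linearity passes to the dual: $H$ linear means any two hyperedges meet in at most one vertex, which dualizes to the statement that any two vertices of $H$ lie in at most one common hyperedge, i.e.\ $\lambda = 1$ in $H^*$; this is exactly the condition that $H^*$ is linear (its $\overline{\lambda} = 1$).

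Once the dictionary is set up, I would apply the preceding theorem to $H^*$ with $r = \delta$ to obtain $b_L(H^*) \geq \binom{\delta}{2}$. Then substituting into the dual relation, either through Corollary~\ref{cor1} directly or through the displayed alternate form $b_L(H^*) = |E(H)| - |V(H)| + b_L(H)$, and solving for $b_L(H)$ yields
\[
b_L(H) = |V(H)| - |E(H)| + b_L(H^*) \geq |V(H)| - |E(H)| + \binom{\delta}{2},
\]
which is the claimed bound.

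The main obstacle I anticipate is bookkeeping the hypothesis translation carefully, rather than any deep difficulty. In particular, one must make sure the two numerical hypotheses of the primal theorem (cardinality bound and count of hyperedges) each land on the correct side under dualization, since vertices and hyperedges swap roles: the ``$\geq r$ hyperedges'' condition becomes a condition on the number of vertices of $H^*$, which is $|E(H)|$, and this must be reconciled with the stated ``at least $\delta$ hyperedges'' hypothesis on $H$. The subtle point is ensuring that the linearity of $H$ genuinely transfers to linearity of $H^*$ under the chosen incidence identification; this follows because the underlying bipartite incidence structure is preserved (up to swapping the two sides) when passing to the dual, as noted in the excerpt's remark that the bipartite graph underlying $\mathrm{IG}(H)$ and $\mathrm{IG}(H^*)$ coincide after forgetting the ordering of parts. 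Apart from this careful matching of hypotheses, the argument is a direct application of duality and requires no new computation.
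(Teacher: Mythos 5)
Your proposal matches the paper's proof exactly: the paper derives Corollary~\ref{cor:delta-delta} in a single line as the dual of the immediately preceding theorem via $b_L(H^*) = |E(H)| - |V(H)| + b_L(H)$, and your translation of the hypotheses (the degree of $v$ in $H$ equals the cardinality of the hyperedge $N(v)$ in $H^*$, and linearity is self-dual since $\overline{\lambda}=1$ for $H$ is equivalent to $\lambda=1$ for $H$) is precisely the intended content. The hypothesis mismatch you rightly flag --- ``at least $\delta$ hyperedges of $H$'' yields $|V(H^*)|\geq\delta$ rather than the literal ``at least $r=\delta$ hyperedges of $H^*$'' --- is an inconsistency inherited from the paper's own statements (compare the ``at least $\lceil\cdot\rceil$ vertices'' hypothesis of Theorem~\ref{thm:BurningLowerUniformity} with the ``at least $r$ hyperedges'' hypothesis of its linear specialization and the ``hyperedges'' hypothesis of Corollary~\ref{app}), not a defect of your argument.
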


\subsection{Upper Bounds}

By decomposing a $C$-matching into the set of vertices or the set of edges that are contained in the matching, we find that we have a vertex cover or edge cover of $H$, respectively.  This also provides us with a number of upper bounds on the lazy burning number of $H$. 

\begin{lemma} \label{lem:Cmatching_MinEdgeCover}
If $H$ is a hypergraph, then we have that any maximum $C$-matching has cardinality at least that of a minimum vertex cover. 
\end{lemma}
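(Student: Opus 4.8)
The plan is to prove the equivalent statement that the vertex set $M_V$ of a maximum $C$-matching $M$ is itself a vertex cover of $H$; the bound $m(H)\geq \tau(H)$, where $\tau(H)$ is the minimum cardinality of a vertex cover, then drops out by comparing cardinalities. First I would record that a $C$-matching is genuinely a matching in $\IG(H)$, so that its cardinality agrees with $|M_V|$. Indeed, writing $M=(v_1h_1,\ldots,v_mh_m)$ with $V_i=\{v_{i+1},\ldots,v_m\}$, if $v_i=v_j$ or $h_i=h_j$ for some $i<j$ then $v_j$ would lie in $h_i\cap V_i$, contradicting the defining condition $h_i\cap V_i=\emptyset$. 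In particular the vertices $v_1,\ldots,v_m$ are pairwise distinct, so $|M_V|=|M|=m(H)$.

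The heart of the argument is the claim that $M_V$ meets every hyperedge, which I would establish by contradiction. Suppose some hyperedge $h$ satisfies $h\cap M_V=\emptyset$, and pick any $v\in h$ (hyperedges being nonempty). I would then prepend the edge $vh$ to $M$, forming the list $M'=(vh, v_1h_1,\ldots,v_mh_m)$, and verify it is a $C$-matching. For the already-present edges the requirement is unchanged: the condition for $h_i$ in $M'$ is precisely $h_i\cap V_i=\emptyset$, inherited from $M$. Only the new leading edge needs checking, and its requirement is exactly $h\cap\{v_1,\ldots,v_m\}=h\cap M_V=\emptyset$, which holds by assumption. Since $v\notin M_V$ and each $h_i$ contains $v_i\in M_V$ while $h\cap M_V=\emptyset$, the edge $vh$ is distinct from every $v_ih_i$, so $M'$ is a $C$-matching of cardinality $m(H)+1$, contradicting the maximality of $m(H)$. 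Hence no such $h$ exists, and $M_V$ is a vertex cover.

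Having shown that $M_V$ is a vertex cover of cardinality $m(H)$, I would conclude that $m(H)=|M_V|\geq \tau(H)$, as required. I expect the main obstacle to be the bookkeeping in the prepending step: confirming that inserting $vh$ at the front leaves the $C$-matching condition intact for the existing edges (with their index shifted by one) and that $vh$ is genuinely a new edge so the cardinality actually increases. Both issues resolve cleanly once one observes that the single hypothesis $h\cap M_V=\emptyset$ simultaneously supplies the emptiness needed for the new leading edge and the distinctness of $h$ from all matched hyperedges.
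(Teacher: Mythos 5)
Your proof is correct and follows essentially the same route as the paper: show that the vertex set $M_V$ of a maximum $C$-matching is a vertex cover by extending the list with a pair $vh$ whenever some hyperedge $h$ misses $M_V$, then compare cardinalities. One small point in your favor: you \emph{prepend} $vh$, which requires exactly the hypothesis $h\cap M_V=\emptyset$, whereas the paper says the pair may be ``appended,'' which read literally would additionally require $v\notin h_i$ for every $i$; your version is the cleaner one, and your preliminary check that the $v_i$ are pairwise distinct (so that $|M_V|=m(H)$) is a detail the paper leaves implicit.
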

\begin{proof}
    Let $M=(v_1h_1, v_2h_2, \ldots, v_kh_k)$ be a $C$-matching of the maximum possible length $k=|V(H)|-b_L(H)$. 
    We will show that the set of vertices given by $M_V=\{v_1, v_2,\ldots, v_k\}$ forms a vertex cover of $H$. 
    Suppose for contradiction that $M_V$ does not form a vertex cover. There must then be some hyperedge $h$ with none of its vertices in $M_V$. 
    For any vertex $v\in h$, the matched pair $vh$ may be appended to the $C$-matching to form a longer $C$-matching, contradicting maximality. 
\end{proof}

The proof to show that the cardinality of a maximum $C$-matching has at least that of the cardinality of a minimum edge cover of $H$ is analogous with dual terms interchanged. An \emph{isolated vertex} is not in any hyperedge. 
\begin{lemma} \label{lem:Cmatching_MinVertexCover}
If $H$ is a hypergraph with no isolated vertices, then any maximum $C$-matching has a cardinality at least the cardinality of a minimum edge cover of $H$. 
\end{lemma}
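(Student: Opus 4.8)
The plan is to mirror the proof of Lemma~\ref{lem:Cmatching_MinEdgeCover}, interchanging the roles of vertices and hyperedges. Given a maximum $C$-matching $M = (v_1h_1, v_2h_2, \ldots, v_kh_k)$ with $k = m(H) = |V(H)| - b_L(H)$, I would show that the set of hyperedges $M_E = \{h_1, h_2, \ldots, h_k\}$ appearing in $M$ is an edge cover of $H$. The $h_i$ are pairwise distinct, since a repetition $h_i = h_j$ with $i < j$ would force $v_j \in h_j = h_i$ while $v_j \in V_i$, contradicting $h_i \cap V_i = \emptyset$. Hence $|M_E| = k$, so a minimum edge cover has cardinality at most $k = |M|$, which is exactly the claimed inequality.

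To see that $M_E$ is an edge cover, I would argue by contradiction: suppose some vertex $v \in V(H)$ lies in no hyperedge of $M_E$, so $v \notin h_i$ for every $i$. Because $H$ has no isolated vertices, $v$ belongs to some hyperedge $h \in E(H)$; and since $v \in h$ while $v \notin h_i$ for all $i$, this $h$ differs from every $h_i$. Moreover $v$ differs from each $v_i$ (as $v_i \in h_i$ but $v \notin h_i$), so the edge $vh$ shares no endpoint in $\IG(H)$ with any edge of $M$. I would then append $vh$ to the end of $M$, forming $M' = (v_1h_1, \ldots, v_kh_k, vh)$, and verify that $M'$ is again a $C$-matching, contradicting the maximality of $M$.

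The verification is where the dual insertion position matters: whereas the vertex-cover argument inserts the new edge at the front, here it must go at the back. Writing $v_{k+1} = v$ and $h_{k+1} = h$, the new final edge satisfies its constraint trivially because $V_{k+1} = \emptyset$. For each earlier index $i \le k$, the relevant set $V_i$ grows only by the single element $v$, so I need $v \notin h_i$ to preserve $h_i \cap V_i = \emptyset$; this is precisely the assumption that $v$ is uncovered by $M_E$. Thus every $C$-matching condition persists, $M'$ has length $k+1$, and we reach the desired contradiction.

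The only real obstacle is pinning down this extension step: confirming that appending (not prepending) is what preserves the $C$-matching condition, and noticing that the no-isolated-vertices hypothesis is used twice, once to guarantee that an edge cover exists at all and once to supply the hyperedge $h$ used to lengthen the matching. As a cross-check, the statement also follows formally from the established duality: by Corollary~\ref{cor:CMatching_Duals} we have $m(H) = m(H^*)$, and a vertex cover of $H^*$ is exactly an edge cover of $H$, since a set $C \subseteq E(H)$ meets every hyperedge $N(v)$ of $H^*$ if and only if some $h \in C$ contains $v$. Applying Lemma~\ref{lem:Cmatching_MinEdgeCover} to $H^*$ then gives $m(H) = m(H^*) \ge$ (minimum vertex cover of $H^*$) $=$ (minimum edge cover of $H$), with the hypothesis ensuring $H^*$ has no empty hyperedge so that these covers are well defined.
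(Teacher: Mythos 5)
Your proof is correct and is exactly the dual argument the paper intends; the paper gives no separate proof here, stating only that it is ``analogous with dual terms interchanged'' from Lemma~\ref{lem:Cmatching_MinEdgeCover}. Your care in pinning down that the new pair $vh$ must be appended at the \emph{end} of the list---so that the only new requirement is $v\notin h_i$ for all $i$, which is precisely the assumption that $v$ is uncovered---is the one nontrivial detail the paper leaves implicit, and you have it right.
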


There are straightforward upper bounds on the minimum size of a vertex cover and an edge cover in terms of the maximum degree $\Delta(H)$ and maximum hyperedge cardinality $\overline{\Delta}(H)$, respectively. 
\begin{theorem}\label{upp}
If $H$ is a hypergraph with maximum vertex degree $\Delta$, then $$b_L(H) \leq |V(H)| - \left\lceil \frac{|E(H)|}{\Delta}\right\rceil .$$
\end{theorem}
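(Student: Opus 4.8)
The plan is to convert the desired upper bound on $b_L(H)$ into a lower bound on $m(H)$ and then route through the vertex cover comparison already established in Lemma~\ref{lem:Cmatching_MinEdgeCover}. By Corollary~\ref{cor:Burning_equals_CMatching} we have $b_L(H) = |V(H)| - m(H)$, so the claimed inequality $b_L(H) \leq |V(H)| - \lceil |E(H)|/\Delta \rceil$ is equivalent to $m(H) \geq \lceil |E(H)|/\Delta \rceil$. It therefore suffices to produce this lower bound on the cardinality of a maximum $C$-matching, and I would phrase the whole argument in these terms.

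The first step is to pass from $C$-matchings to vertex covers. By Lemma~\ref{lem:Cmatching_MinEdgeCover}, any maximum $C$-matching has cardinality at least that of a minimum vertex cover of $H$, i.e.\ a minimum set of vertices meeting every hyperedge; writing $\tau(H)$ for this minimum cardinality, we get $m(H) \geq \tau(H)$. The second step is the counting bound $\tau(H) \geq \lceil |E(H)|/\Delta \rceil$: since every vertex lies in at most $\Delta$ hyperedges, any set of $\tau(H)$ vertices can meet at most $\tau(H)\,\Delta$ hyperedges, and as a vertex cover must meet all $|E(H)|$ of them, we obtain $\tau(H)\,\Delta \geq |E(H)|$, hence $\tau(H) \geq |E(H)|/\Delta$; integrality of $\tau(H)$ then upgrades this to the ceiling. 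Chaining the two inequalities gives $m(H) \geq \tau(H) \geq \lceil |E(H)|/\Delta \rceil$, and substituting into $b_L(H) = |V(H)| - m(H)$ yields the theorem.

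There is no genuine obstacle here beyond bookkeeping, so the main point I would be careful about is that the inequality directions line up correctly: the theorem is an \emph{upper} bound on $b_L(H)$, which forces me to use a \emph{lower} bound on $m(H)$, and hence the vertex cover (rather than edge cover) side of the two comparison lemmas, together with the fact that a vertex cover meets \emph{every} hyperedge. I would also note the harmless standing assumption that hyperedges are nonempty, so that a vertex cover exists and $\tau(H)$ is finite; if $|E(H)| = 0$ the bound reduces to $b_L(H) \leq |V(H)|$ and holds trivially.
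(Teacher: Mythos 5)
Your proposal is correct and follows essentially the same route as the paper: both lower-bound $m(H)$ by the minimum vertex cover size via the $C$-matching/vertex-cover comparison, then lower-bound the vertex cover by $\lceil |E(H)|/\Delta\rceil$ through the same degree-counting argument, and conclude via $b_L(H)=|V(H)|-m(H)$. The only difference is cosmetic: you cite the lemma whose statement concerns vertex covers (labelled \texttt{lem:Cmatching\_MinEdgeCover}), which is in fact the correct one, whereas the paper's proof cites the edge-cover lemma's label, apparently owing to the two labels being swapped relative to their contents.
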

\begin{proof}
We find a lower bound on the size of a vertex cover of $H$, which combines with Lemma~\ref{lem:Cmatching_MinVertexCover} to complete the proof. 
If a vertex cover of $H$ contains $t$ vertices, then since each vertex is contained in at most $\Delta$ hyperedges, at most $t\Delta$ hyperedges contain vertices from the vertex cover. 
Since each hyperedge of $E(H)$ contains a vertex in the cover, we then have $|E(H)| \leq t\Delta$.
We then have that $b_L(H) = |V(H)| - m(H) \leq |V(H)| - t$, and the result follows. 
\end{proof}

In an analogous fashion, the hyperedges from a $C$-matching form an edge cover of $H$, from which we derive the following result using an edge cover instead of a vertex cover. 
\begin{theorem}
If $H$ is a hypergraph with maximum hyperedge cardinality $\overline{\Delta}$ that contains no isolated vertices, then $$b_L(H) \leq |V(H)| - \left\lceil \frac{|V(H)|}{\overline{\Delta}}\right\rceil .$$
\end{theorem}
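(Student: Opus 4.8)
The plan is to mirror the proof of Theorem~\ref{upp} under the vertex/hyperedge duality, replacing the vertex cover with an edge cover. By Lemma~\ref{lem:Cmatching_MinVertexCover}, since $H$ contains no isolated vertices, any maximum $C$-matching has cardinality at least that of a minimum edge cover of $H$; equivalently, $m(H)$ is bounded below by the minimum number of hyperedges in an edge cover. Combined with Corollary~\ref{cor:Burning_equals_CMatching}, which gives $b_L(H) = |V(H)| - m(H)$, it therefore suffices to establish a lower bound on the size of a minimum edge cover, namely $\lceil |V(H)|/\overline{\Delta} \rceil$.

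First I would bound below the size of an arbitrary edge cover. Suppose an edge cover of $H$ consists of $t$ hyperedges. Since each hyperedge has cardinality at most $\overline{\Delta}$, the union of these $t$ hyperedges contains at most $t\overline{\Delta}$ vertices. As the hypergraph has no isolated vertices, an edge cover exists, and every vertex of $H$ must lie in some hyperedge of the cover, so $|V(H)| \leq t\overline{\Delta}$. This yields $t \geq |V(H)|/\overline{\Delta}$, and since $t$ is an integer, $t \geq \lceil |V(H)|/\overline{\Delta} \rceil$. In particular, the minimum edge cover has at least $\lceil |V(H)|/\overline{\Delta} \rceil$ hyperedges.

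Chaining these inequalities then completes the argument: using Lemma~\ref{lem:Cmatching_MinVertexCover} and the edge-cover bound above, I obtain $m(H) \geq \lceil |V(H)|/\overline{\Delta} \rceil$, and hence
$$b_L(H) = |V(H)| - m(H) \leq |V(H)| - \left\lceil \frac{|V(H)|}{\overline{\Delta}} \right\rceil,$$
as claimed.

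The argument is essentially a routine dual of Theorem~\ref{upp}, so there is no substantial obstacle. The only point requiring care is the role of the no-isolated-vertices hypothesis, which is needed twice: it guarantees that an edge cover of $H$ exists at all (an isolated vertex could never be covered), and it is precisely the hypothesis under which Lemma~\ref{lem:Cmatching_MinVertexCover} relates $m(H)$ to the minimum edge cover. Once this hypothesis is in hand, the counting step and the final substitution are immediate.
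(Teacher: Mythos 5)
Your proof is correct and is exactly the argument the paper intends: the paper omits the details, stating only that the result follows "in an analogous fashion" to Theorem~\ref{upp} by using an edge cover in place of a vertex cover, which is precisely the dualization you carry out. Your counting step ($|V(H)| \leq t\overline{\Delta}$ for a $t$-hyperedge cover) and your handling of the no-isolated-vertices hypothesis match the intended argument.
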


In the above two results, we found that the set of vertices (respectively, edges) contained within a $C$-matching formed a vertex (respectively, edge) cover of $H$. 
Instead, if we consider the ordered list of vertices $(v_1, v_2, \ldots, v_k)$ coming from the restriction of a maximum $C$-matching to just its vertices, then we arrive at a topic previously studied by the present authors in the context of burning Latin square hypergraphs; see \cite{LS}. 
An $n$-uniform Latin square hypergraph, written $H_L$, has vertex set as the entries of the Latin square $L$ and hyperedges containing all vertices that share a given row, a given column, or a given symbol. 
A \emph{cover-sequence} of $H_L$ is a sequence of vertices $v_1, v_2,\ldots, v_k$ such that $\{v_1, v_2,\ldots, v_k\}$ is a vertex cover of $H_L$ and each $v_i$ does not share all three of its incident hyperedges with vertices that proceeded it in the sequence. 
It is straightforward to see that the restriction of a $C$-matching to its vertices satisfies these conditions. 
Further, if we match a vertex $v_i$ to a hyperedge $h_i$ such that $h_i$ does not contain a vertex earlier in the sequence, then the constructed sequence of vertex-edge pairs forms a $C$-matching on $\mathrm{IG}(H_L)$.
As such, $C$-matchings of $\mathrm{IG}(H_L)$ are equivalent to cover-sequences of Latin squares, and so results in this section generalize results on the lazy burning of Latin squares in \cite{LS}.  

\section{Hypergraph Cores}

We introduce another characterization of lazy burning sets in terms of certain induced subhypergraphs. We are interested in the maximum induced subhypergraph so that every hyperedge is of cardinality greater than one. In the literature, this would correspond to the so-called 2-core of the dual hypergraph. For ease of notation, we refer to this simply as the \emph{core} of $H$, written $\core(H)$ (this is not to be confused with the core of a graph as used in graph homomorphism theory). The core can be obtained by continually deleting every hyperedge with cardinality one, along with the vertex it contains, until all remaining hyperedges have cardinality greater than one. 

Let $H$ be a hypergraph and $U \subseteq V(H)$. 
The \emph{subhypergraph of $H$ weakly induced by $U$}, denoted $H[U]$, is the hypergraph with $V(H[U]) = U$ and
$$ E(H[U]) = \{ h\cap U: h\in E(H)\text{ and } h\cap U\neq\emptyset\}. $$
This is also not to be confused with the strongly induced subhypergraphs; in the following, we mean an {\em induced subhypergraph} as a weakly induced subhypergraph since it is the only type of subhypergraph we are concerned with in this paper. We define the subhypergraph formed by vertex removal as $H\setminus U = H[V(H)\setminus U]$. 

We now provide an algorithm for determining the core of a hypergraph and a theorem on its correctness.
\setlength{\algomargin}{20pt}
\begin{algorithm}[!htbp]
\SetKwInOut{Input}{input}\SetKwInOut{Output}{output}
\Input{A hypergraph $H$.}
\Output{The core of $H$, an ordered list of vertex removals $R$.}
\BlankLine
Set $H' \leftarrow H$;\\ 
Set $R \leftarrow []$ (an empty list);\\
Set $S \leftarrow \{e\in E(H'): |e| = 1\}$;\\
\While{$S\neq\emptyset$}{
    Pick any $\{v\}\in S$; \\ 
    Set $S\leftarrow S\setminus\{\{v\}\}$;\\
    Add $v$ to $R$;\\ 
    Set $S \leftarrow S\cup\{\{u\}: \{u,v\}\in E(H')\}$;\\
	Set $H' \leftarrow H'\setminus\{v\}$; 
}
\Return{$H'$ and $R$.}
\caption{An algorithm returning the core of $H$ and an ordered list of vertex removals.}
\label{core_H}
\end{algorithm}

\begin{theorem}
After inputting a hypergraph $H$ to Algorithm~\ref{core_H}, it returns $\core(H)$ and a vertex set $R = V(H)\setminus V(\core(H))$ with an indexing. 
\end{theorem}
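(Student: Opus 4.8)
The plan is to prove two things: that Algorithm~\ref{core_H} terminates, and that its output hypergraph $H'$ equals $\core(H)$ while the returned list $R$ records exactly $V(H)\setminus V(\core(H))$ together with an ordering. The entire argument rests on a single loop invariant describing the set $S$. Before starting I would record that $\core(H)$ is well defined: call $U\subseteq V(H)$ \emph{admissible} if every hyperedge of $H$ meeting $U$ meets it in at least two vertices, equivalently if $H[U]$ has no singleton hyperedge. If $U_1,U_2$ are admissible then so is $U_1\cup U_2$, since any hyperedge $h$ with $|h\cap(U_1\cup U_2)|=1$ would, for the unique vertex $x\in h\cap(U_1\cup U_2)$ lying in (say) $U_1$, force $h\cap U_1=\{x\}$ and contradict admissibility of $U_1$. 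Hence the admissible sets have a unique maximum element $U_{\max}$, and $\core(H)=H[U_{\max}]$.

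The key step is the invariant: at the top of each pass through the \textbf{while} loop (and at termination), $S$ is precisely the set of singleton hyperedges of the current $H'$. I would prove this by induction on the number of completed iterations. The base case is the initialization line, which sets $S$ to exactly this. For the inductive step, fix an iteration starting from $H'$ with $S$ equal to its singletons; the algorithm removes the chosen $\{v\}$ from $S$, then adjoins every $\{u\}$ with $\{u,v\}\in E(H')$, and finally replaces $H'$ by $H'\setminus\{v\}$. Using $E(H'\setminus\{v\})=\{h\setminus\{v\}:h\in E(H'),\ h\setminus\{v\}\neq\emptyset\}$, the new singletons are exactly the old singletons $\{w\}$ with $w\neq v$ (which survive in $S$ because only $\{v\}$ was deleted) together with the sets $\{u\}$ arising from size-two hyperedges $\{u,v\}$ (which are exactly those just inserted); hyperedges of size $\geq 3$ cannot shrink to singletons, and $\{v\}$ itself vanishes. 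This matches $S$ after the iteration, completing the induction. Termination is then immediate: each pass adjoins a vertex to $R$ and strictly shrinks $V(H')$, so the loop halts after at most $|V(H)|$ iterations, at which point $S=\emptyset$ and, by the invariant, $H'$ has no singleton hyperedge.

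It remains to identify $H'$ with $\core(H)$, which I would do by two inclusions on vertex sets. Since the terminal $H'$ has no singleton hyperedge, $V(H')$ is admissible, giving $V(H')\subseteq U_{\max}$. For the reverse inclusion I would show, by induction on the order of removal, that the set $R_{\mathrm{prev}}$ of already-removed vertices stays disjoint from $U_{\max}$: when $v$ is removed it lies in a singleton hyperedge of $H\setminus R_{\mathrm{prev}}$, so some original $h\in E(H)$ satisfies $h\subseteq R_{\mathrm{prev}}\cup\{v\}$ with $v\in h$; were $v\in U_{\max}$, admissibility would supply a second vertex $w\in h\cap U_{\max}$, forcing $w\in R_{\mathrm{prev}}\cap U_{\max}$, contrary to the inductive hypothesis. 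Hence $R\cap U_{\max}=\emptyset$, so $U_{\max}\subseteq V(H')$ and thus $V(H')=U_{\max}$. Finally, weak induction is transitive, $(H[U_1])[U_2]=H[U_2]$ for $U_2\subseteq U_1$, so the iterated single-vertex deletions yield $H'=H[V(H')]=H[U_{\max}]=\core(H)$, and $R=V(H)\setminus V(\core(H))$ carries the desired indexing by insertion order.

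The main obstacle is the bookkeeping in the loop invariant: one must check that $S$ neither loses a genuine singleton nor gains a spurious one as $v$ is deleted, the delicate point being that the newly created singletons are exactly the reductions of size-two hyperedges through $v$ and that these are read off from $E(H')$ \emph{before} the deletion. A secondary, conceptual point is that the greedy deletion order is irrelevant; this is handled not by a confluence argument but by pinning $V(H')$ to the order-independent set $U_{\max}$ through the two inclusions above.
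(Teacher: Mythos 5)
Your proof is correct and follows essentially the same route as the paper: uniqueness of the core via closure of admissible (singleton-free) vertex sets under union, followed by a first-removed-vertex-in-the-core contradiction showing the algorithm's output coincides with the core. The only difference is that you additionally verify the loop invariant on $S$ (and hence termination and that the terminal $H'$ has no singleton hyperedge), a point the paper's proof takes for granted.
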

\begin{proof}
Suppose there are two induced subhypergraphs of $H$, say $H_1$ and $H_2$, such that all their hyperedges are of cardinality greater than one. The subhypergraph $H[V(H_1)\cup V(H_2)]$ also has the cardinality of all its hyperedges greater than one. 
Therefore, the core of a hypergraph is unique. 

From Algorithm~\ref{core_H}, we obtain an induced subhypergraph $J$ of $\core(H)$ with no hyperedge of cardinality one. 
Let $R = V(H)\setminus V(J) = \{v_1,v_2,\dots, v_r\}$ be the removed vertices, indexed by the algorithm. 
Suppose to the contrary that $J$ is not the core, and hence there is $A \subseteq R$ so that $H' = H[V(J)\cup A] = \core(H)$. Let $m$, with $1\leq m\leq r,$ be the least index so that $v_{m}\in A$. 
If $R' = \{v_1,v_2,\dots, v_{m-1}\}$, then $R'\subseteq R\setminus A$ and, therefore, $H'$ is an induced subhypergraph of $H\setminus R'$. 
By Algorithm~\ref{core_H}, we have that $\{v_{m}\}\in E(H\setminus R')$, which implies that $\{v_{m}\}\in E(H')$. 
This contradicts the assumption that $H'$ contains no singleton hyperedge. 
\end{proof}

We note that Algorithm 1 produces the core and a list of vertex removals in polynomial time. 
\begin{theorem}\label{thm: core complexity}
If $H$ is a hypergraph with order $n$ and $m$ edges, then the complexity of Algorithm~\ref{core_H} is $O(nm)$. 
\end{theorem}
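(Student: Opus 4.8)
The plan is to analyze the running time of Algorithm~\ref{core_H} by bounding the total work done across all iterations of the while loop. First I would observe that each vertex $v\in V(H)$ can be added to $R$ at most once, since once $v$ is removed via the line $H'\leftarrow H'\setminus\{v\}$, it never reappears in $H'$ and hence can never again be placed into $S$ as a singleton. Consequently, the while loop executes at most $n$ times. The key step is then to bound the cost of a single iteration. The dominant operations are updating $S$ by scanning for edges of the form $\{u,v\}$ (the newly created singletons after deleting $v$), and performing the actual vertex removal $H'\leftarrow H'\setminus\{v\}$, which requires touching every hyperedge containing $v$ and decrementing its effective cardinality.

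For the cost per iteration, I would argue that removing $v$ and detecting which hyperedges collapse to singletons requires examining each hyperedge that contains $v$; across the whole execution this is naturally analyzed by amortization. Concretely, I would maintain for each hyperedge a counter of its current cardinality, so that deleting $v$ amounts to decrementing the counter of each edge incident to $v$ and checking whether any counter has dropped to $1$. Since the sum of hyperedge-incidences is $\sum_{h\in E(H)}|h|$, one might hope for a bound in terms of that quantity; however, to obtain the stated $O(nm)$ bound I would instead argue more crudely: in each of the at most $n$ iterations, the removal step and the update of $S$ together scan at most all $m$ hyperedges (checking incidence of $v$ and updating cardinality counts), costing $O(m)$ work per iteration. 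Multiplying the $O(n)$ iterations by the $O(m)$ per-iteration cost yields the claimed $O(nm)$ bound.

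The main obstacle I anticipate is being careful about what counts as a unit of work, since the naive per-iteration cost could in principle depend on edge sizes rather than just the edge count $m$. To keep the bound clean at $O(nm)$, I would fix a data-representation assumption — for instance, storing each hyperedge together with a cardinality counter and an incidence list — so that deleting a vertex and recomputing the singleton set $S$ costs $O(m)$ (a single pass over the edges) rather than something depending on $\sum |h|$. With this convention, the amortized argument collapses to the straightforward product bound, and I would present the proof as: at most $n$ loop iterations, each costing $O(m)$, giving total complexity $O(nm)$.
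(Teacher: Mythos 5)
Your proof is correct and follows essentially the same route as the paper's: the while loop runs at most $n$ times since each vertex is removed at most once, and each iteration costs $O(m)$ (the paper simply asserts that Lines 8 and 9 take $O(m)$-time, while you additionally justify this with a data-representation convention). The extra care you take about whether the per-iteration cost depends on $\sum_{h}|h|$ versus $m$ is a reasonable refinement but does not change the argument.
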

\begin{proof}
Line~3 takes $O(m)$-time. 
The \textbf{while} loop on Line~4 repeats for at most $n$ times, say $O(n)$. 
For each iteration of the \textbf{while} loop, both Line~8 and 9 can be computed in $O(m)$-time. 
All other lines take $O(1)$-time. 
\end{proof}

In the following lemma, we collect some properties of cores. The proofs either follow from the definitions or are straightforward, arguing via a chronological list of lazy burning. A hypergraph is {\em degenerate} if its core contains no vertex.  

\begin{lemma}\label{lemmac}
Let $H$ be a hypergraph and $R =\{r_1,r_2,\dots, r_k\} = V(H)\setminus V(\core(H))$ be the vertex set returned by Algorithm~\ref{core_H}. Suppose $R\neq\emptyset$. We have the following. 
\begin{enumerate}
\item
For every $1\leq i\leq k$, $\{r_i\}$ is a singleton hyperedge in $H\setminus\{r_1,r_2,\dots, r_{i-1}\}$. 
In particular, $R\neq\emptyset$ implies $H$ contains a singleton hyperedge. 

\item If $L\subsetneq L'\subseteq V(H)$ then  $\core(H\setminus L')$ is an induced subhypergraph of $\core(H\setminus L)$. 
If $L' = L\cup\{v\}$ for some $v\in V(H)\setminus L$ such that $\{v\}\in E(H\setminus L)$, then $\core(H\setminus L') = \core(H\setminus L)$, and $H\setminus L'$ is degenerate if and only if $H\setminus L$ is degenerate.  
\item For all $1\leq i\leq k$, $\core(H) = \core(H\setminus\{r_1,r_2,\dots, r_i\})$. 
\end{enumerate}
\end{lemma}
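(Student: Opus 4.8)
The plan is to prove the three parts in the order (1), (2), (3), since (3) follows from (1) and (2) by induction, (1) is a direct reading of Algorithm~\ref{core_H}, and (2) carries the real content. The two tools I would lean on throughout are the \emph{maximality characterization} of the core established in the uniqueness argument above — namely, that $\core(K)$ is the largest induced subhypergraph of $K$ in which every hyperedge has cardinality greater than one, so that \emph{any} induced subhypergraph of $K$ with this property is an induced subhypergraph of $\core(K)$ — together with the transitivity of the (weakly) induced subhypergraph operation: if $U\subseteq W\subseteq V(K)$ then $(K[W])[U]=K[U]$, which is a short check from the edge-set definition since $U\subseteq W$ forces $(h\cap W)\cap U=h\cap U$.

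For (1), I would trace the execution of Algorithm~\ref{core_H}. At the iteration in which $r_i$ is added to $R$, the working hypergraph $H'$ has had exactly $r_1,\dots,r_{i-1}$ deleted, so $H'=H\setminus\{r_1,\dots,r_{i-1}\}$; the vertex $r_i$ is chosen precisely because $\{r_i\}\in S$, i.e.\ $\{r_i\}$ is a singleton hyperedge of $H'$. This is exactly the assertion. The ``in particular'' clause is the case $i=1$: if $R\neq\emptyset$ then $\{r_1\}$ is a singleton hyperedge of $H\setminus\emptyset=H$.

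For (2), I would argue both inclusions via maximality. For the first statement, $\core(H\setminus L')$ is an induced subhypergraph of $H\setminus L'$, and $H\setminus L'$ is an induced subhypergraph of $H\setminus L$ because $L\subsetneq L'$; by transitivity $\core(H\setminus L')$ is an induced subhypergraph of $H\setminus L$ all of whose hyperedges have cardinality greater than one, so by maximality it is an induced subhypergraph of $\core(H\setminus L)$. For the second statement, write $C=V(\core(H\setminus L))$ and note $v\notin C$: otherwise the hyperedge $\{v\}$ of $H\setminus L$ would restrict to the singleton $\{v\}\cap C=\{v\}$ in $\core(H\setminus L)$, contradicting that the core has no hyperedge of cardinality one. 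Hence $C\subseteq V(H\setminus L)\setminus\{v\}=V(H\setminus L')$, and since $H\setminus L'=(H\setminus L)\setminus\{v\}$ with $v\notin C$, transitivity gives $(H\setminus L')[C]=(H\setminus L)[C]=\core(H\setminus L)$. Thus $\core(H\setminus L)$ is an induced subhypergraph of $H\setminus L'$ with all hyperedges of cardinality greater than one, so by maximality it is an induced subhypergraph of $\core(H\setminus L')$. Combining this with the first statement yields $\core(H\setminus L')=\core(H\setminus L)$, and the degeneracy equivalence is then immediate since degeneracy depends only on whether the (now equal) core has a vertex.

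Finally, for (3) I would induct on $i$. The base case $\core(H)=\core(H\setminus\{r_1\})$ applies (2) with $L=\emptyset$, $v=r_1$, using (1) to verify $\{r_1\}\in E(H)$. For the inductive step, assuming $\core(H)=\core(H\setminus\{r_1,\dots,r_{i-1}\})$, part (1) gives that $\{r_i\}$ is a singleton hyperedge of $H\setminus\{r_1,\dots,r_{i-1}\}$, so (2) with $L=\{r_1,\dots,r_{i-1}\}$ and $v=r_i$ yields $\core(H\setminus\{r_1,\dots,r_i\})=\core(H\setminus\{r_1,\dots,r_{i-1}\})=\core(H)$. The step I expect to require the most care is the second statement of (2): one must keep the \emph{edge sets} (not merely the vertex sets) aligned when passing between $H\setminus L$, $H\setminus L'$, and their cores, which is exactly where transitivity of the weakly induced construction and the two-sided use of maximality do the work.
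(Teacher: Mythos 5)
Your proof is correct. There is nothing in the paper to compare it against: the authors do not prove Lemma~\ref{lemmac} at all, remarking only that the claims ``either follow from the definitions or are straightforward, arguing via a chronological list of lazy burning.'' Your argument fills that gap cleanly. The two tools you isolate --- the maximality characterization of the core extracted from the paper's uniqueness argument (any induced subhypergraph all of whose hyperedges have cardinality greater than one sits inside the core), and transitivity of weak induction, $(K[W])[U]=K[U]$ for $U\subseteq W$ --- are exactly what is needed, and the genuinely nontrivial step is the one you flag: in the second half of (2), showing $v\notin V(\core(H\setminus L))$ because the singleton $\{v\}\in E(H\setminus L)$ would otherwise restrict to a singleton hyperedge of the core, so that $\core(H\setminus L)$ survives intact as an induced subhypergraph of $H\setminus L'$ and the two-sided maximality argument forces equality of the cores. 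Your reading of Algorithm~\ref{core_H} in (1) is also sound (the invariant that $S$ contains only singleton hyperedges of the current $H'$ is preserved by the update on Line~8), and (3) is a routine induction from (1) and (2). Notably, you never need chronological lists of lazy burning, so your route is purely definitional and arguably more self-contained than the sketch the authors gesture at.
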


We also need the following lemma.

\begin{lemma}\label{lemma: backward closure}
Let $H$ be a hypergraph, $h\in E(H)$, and $B\subseteq V(H)$. 
Assume $h\subseteq B$ and choose any vertex $u\in h$. 
The vertex set $B$ is a lazy burning set for $H$ if and only if $B\setminus\{u\}$ is a lazy burning set for $H$.
\end{lemma}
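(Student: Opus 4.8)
The plan is to prove both directions of the equivalence, exploiting the fact that adding or removing a vertex of a fully-burned hyperedge does not change what the process can accomplish. I would phrase everything through chronological lists of lazy burning, since Lemma~\ref{lem:ChronList_equals_Burning} guarantees that a set is a lazy burning set exactly when there is a chronological list burning all the remaining vertices, and this gives a clean combinatorial handle.

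\medskip
\textbf{Forward direction.} Suppose $B$ is a lazy burning set for $H$, where $h\subseteq B$ and $u\in h$. I want to show $B'=B\setminus\{u\}$ is also a lazy burning set. The key observation is that $u$ can be re-burned \emph{for free}: since $h\setminus\{u\}\subseteq B\setminus\{u\}=B'$, the hyperedge $h$ has all but its vertex $u$ burned as soon as the process starts, so $u$ burns in the very first propagation round. Concretely, I would take a chronological list of lazy burnings witnessing that $B$ burns everything, prepend the single step $\{h,u\}$ (which is valid because $h\setminus\{u\}\subseteq B'$), and check that the resulting list is still a chronological list starting from $B'$. After this first step the burned set equals $B'\cup\{u\}=B$, and from there the original list proceeds unchanged. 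Hence $B'$ burns all of $V(H)$, so it is a lazy burning set.

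\medskip
\textbf{Reverse direction.} Suppose $B'=B\setminus\{u\}$ is a lazy burning set. I must show $B$ is as well. This direction is essentially monotonicity: enlarging a lazy burning set cannot hurt. Since $B'\subseteq B$, any vertex burned by $B'$ is also burned by $B$, because the propagation rule only depends on having enough burned vertices in a hyperedge, and $B$ burns at least as much as $B'$ at every round. Formally, I would take a chronological list burning everything from $B'$ and argue by induction that each propagation step remains valid when the initial set is the larger $B$; alternatively, one notes $u$ is already burned at the start, so whatever role $u$ played as a propagated vertex in the $B'$-process is simply bypassed.

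\medskip
The routine care here is bookkeeping on the chronological list: in the forward direction I need the prepended step to satisfy the defining condition $h\setminus\{u\}\subseteq B_0$ where $B_0=B'$, and I must confirm the indices of the subsequent steps still form a valid chronological list (each later $h_i\setminus\{v_i\}$ is contained in the appropriate $B_{i-1}$, which now includes $u$ earlier than before, so the containments only improve). I do not anticipate a genuine obstacle, since both implications reduce to the fact that a vertex lying in a hyperedge all of whose other vertices are burned is itself burnable at no cost; the main thing to get right is stating the monotonicity argument cleanly so that the reverse direction is not merely asserted.
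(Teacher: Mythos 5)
Your proposal is correct and follows essentially the same route as the paper: the forward direction prepends the step $\{h,u\}$ to a chronological list for $B$ (valid since $h\setminus\{u\}\subseteq B\setminus\{u\}$, after which the burned set returns to $B$), and the reverse direction is just monotonicity from $B\setminus\{u\}\subseteq B$. The paper's proof is a two-line version of exactly this argument, so no further comparison is needed.
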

\begin{proof}
Fix a chronological list $S$ of $B$ and insert $\{h,u\}$ at the beginning of $S$. One may verify that the new list is a chronological list of a lazy burning set $B\setminus\{u\}$. 
The reverse implication follows immediately from the fact that $B\setminus \{u\}\subseteq B$.
\end{proof}

We provide another characterization of lazy burning in the following, which is the main result of the section.
\begin{theorem}\label{thm: empty core}
A vertex subset $B$ of a hypergraph $H$ is a lazy burning set if and only if the hypergraph $H\setminus B$ is degenerate.  
\end{theorem}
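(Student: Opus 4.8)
The plan is to show that the lazy-burning propagation on $H$ started from $B$ is, step for step, the same process as the core computation (iterated deletion of singleton hyperedges) applied to the weakly induced subhypergraph $H\setminus B$. The bridge is a single observation: if $W$ is a set of vertices already removed from $V(H)\setminus B$ and $v\in(V(H)\setminus B)\setminus W$, then $\{v\}$ is a singleton hyperedge of $(H\setminus B)\setminus W$ exactly when there is a hyperedge $h\in E(H)$ with $h\setminus\{v\}\subseteq B\cup W$; indeed the condition $h\cap\big((V(H)\setminus B)\setminus W\big)=\{v\}$ is equivalent to $h\setminus\{v\}\subseteq B\cup W$. The right-hand condition is precisely the rule by which $v$ burns once $B\cup W$ is burned, so singleton hyperedges in the shrinking subhypergraph correspond exactly to available propagation steps in the burning process.

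For the reverse direction, I would suppose $H\setminus B$ is degenerate, so that running Algorithm~\ref{core_H} on $H\setminus B$ deletes every vertex; let $r_1,\dots,r_k$ be the removed vertices in the order produced, so that $\{r_i\}$ is a singleton hyperedge of $(H\setminus B)\setminus\{r_1,\dots,r_{i-1}\}$ (this is Lemma~\ref{lemmac}(1) applied to $H\setminus B$). By the observation, each such singleton corresponds to a hyperedge $h_i\in E(H)$ with $h_i\setminus\{r_i\}\subseteq B\cup\{r_1,\dots,r_{i-1}\}$, so $(\{h_1,r_1\},\dots,\{h_k,r_k\})$ is a chronological list of lazy burning with initial set $B_0=B$ and $B_k=V(H)$. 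Lemma~\ref{lem:ChronList_equals_Burning} then gives that $B$ is a lazy burning set.

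For the forward direction, I would suppose $B$ is a lazy burning set and list the vertices burned by propagation as $v_1,\dots,v_k$, ordered so that vertices burning in earlier rounds precede those burning in later rounds, with ties within a round broken arbitrarily. If $v_i$ burns in round $r$, then for some hyperedge $h_i$ every vertex of $h_i\setminus\{v_i\}$ is burned by the end of round $r-1$, hence precedes $v_i$ in the order, giving $h_i\setminus\{v_i\}\subseteq B\cup\{v_1,\dots,v_{i-1}\}$. By the observation, $\{v_i\}$ is a singleton hyperedge of $(H\setminus B)\setminus\{v_1,\dots,v_{i-1}\}$, so the core computation on $H\setminus B$ may delete $v_1,\dots,v_k$ in this order, removing every vertex; since the core is unique, $\core(H\setminus B)$ has no vertex and $H\setminus B$ is degenerate.

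I expect the main obstacle to be the careful bookkeeping around the weakly induced subhypergraph: several hyperedges of $H$ may restrict to the same hyperedge of $H\setminus B$, and hyperedges contained entirely in $B$ disappear, so one must verify that these phenomena neither create nor destroy a singleton $\{v\}$ with $v$ unburned, which is exactly the content of the equivalence above. A secondary subtlety is justifying that the burning rounds can be refined into the single total order $v_1,\dots,v_k$; this rests on the fact that all of $h_i\setminus\{v_i\}$ is burned strictly before the round in which $v_i$ burns and therefore precedes $v_i$ in the chosen order.
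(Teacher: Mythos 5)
Your proof is correct, and it takes a noticeably different route from the paper's. The paper argues by induction on $|V(H\setminus B)|$: in each direction it peels off a single vertex (the first entry of a chronological list, or a first singleton $r_1$ of $H\setminus B$), invokes the induction hypothesis, and glues the step back on using Lemma~\ref{lemmac}(2) and Lemma~\ref{lemma: backward closure}. You instead give a direct, non-inductive simulation argument: your central observation that $\{v\}$ is a singleton hyperedge of $(H\setminus B)\setminus W$ exactly when some $h\in E(H)$ satisfies $h\setminus\{v\}\subseteq B\cup W$ identifies each step of the core-deletion process on $H\setminus B$ with an available propagation step of lazy burning, and you then run the two processes in lockstep in both directions (building a chronological list from a deletion order, and a deletion order from the burning rounds). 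Both arguments hinge on the same underlying equivalence between singletons of the weakly induced subhypergraph and burnable vertices; the paper's induction packages this into one-step lemmas and avoids your two bookkeeping subtleties (refining burning rounds to a total order, and appealing to uniqueness of the core to conclude degeneracy from a complete deletion sequence), while your version is more explicit and has the advantage of directly producing the chronological list realizing $B$, and of making visible that lazy burning on $H$ from $B$ \emph{is} the core computation on $H\setminus B$. Your appeal to uniqueness of the core in the forward direction is legitimate but deserves one more sentence: if $\core(H\setminus B)$ were nonempty, the first deleted vertex lying in it would yield a singleton hyperedge of the core, a contradiction; this is the same argument as the paper's correctness proof for Algorithm~\ref{core_H}.
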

\begin{proof}
Let $B\subseteq V(H)$. 
We proceed by induction on the order of $H\setminus B$, say $k = |V(H\setminus B)|$. 
The statement holds when $k=0$, where $V(H) = B$. 

For the induction hypothesis, fix $k\geq 1$, and assume the following holds for any hypergraph $H_0$ and vertex set $B_0\subseteq V(H_0)$ satisfying $|V(H_0\setminus B_0)|< k$: $B_0$ is a lazy burning set for $H_0$ if and only if $H_0\setminus B_0$ is degenerate. Let $H$ be a hypergraph and $B\subseteq V(H)$ such that $|V(H\setminus B)|=k$. We will prove the statement holds for $H$ and $B$.

For the forward direction, suppose $B$ is a lazy burning set for $H$. Fix a chronological list of $B$ and let $\{v_1, h_1\}$ be the first element in the list. 
Let $B_1 = B\cup\{v_1\}$; we have that $B_1$ is a lazy burning set for $H$ since $B\subseteq B_1$. 
By the induction hypothesis and that $|V(H\setminus B_1)|<k$, we have that $H\setminus B_1$ is degenerate. 
Finally, since $\{v_1\} = h_1\setminus B$ is a hyperedge in $H\setminus B$, we have that $H\setminus B$ is degenerate by Lemma~\ref{lemmac} (2).

For the reverse direction, suppose $H\setminus B$ is degenerate. 
By Lemma~\ref{lemmac} (1), let $r_1\in R$ be such that $\{r_1\}\in E(H\setminus B)$.
This implies that there is a hyperedge $h\in E(H)$ so that $\{r_1\} =  h\setminus B$. 
By Lemma~\ref{lemmac} (2), $H\setminus(B\cup\{r_1\})$ is degenerate. Therefore, by the induction hypothesis, $B\cup \{r_1\}$ is a lazy burning set for $H$. Since $h\subseteq B\cup\{r_1\}$, $B$ is a lazy burning set for $H$ by Lemma~\ref{lemma: backward closure}.
\end{proof}

The next result reduces the burning number of a hypergraph to the burning number of its core.

\begin{theorem}\label{thm: also burns the core}
    For any hypergraph $H$ and $B\subseteq V(\core(H))$, $B$ is a lazy burning set for $\core(H)$ if and only if it is a lazy burning set for $H$.  
\end{theorem}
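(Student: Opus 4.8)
The plan is to reduce both sides of the equivalence to a statement about degeneracy using Theorem~\ref{thm: empty core}, and then to compare the two degeneracy conditions by peeling off, one at a time, the vertices that Algorithm~\ref{core_H} deletes when forming the core. Applying Theorem~\ref{thm: empty core} to $\core(H)$ gives that $B$ is a lazy burning set for $\core(H)$ if and only if $\core(H)\setminus B$ is degenerate, while applying it to $H$ gives that $B$ is a lazy burning set for $H$ if and only if $H\setminus B$ is degenerate. Hence the whole statement follows once I show that $H\setminus B$ is degenerate if and only if $\core(H)\setminus B$ is degenerate.

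The first observation is that $\core(H)$ is exactly the subhypergraph obtained from $H$ by deleting the vertex set $R = \{r_1, r_2, \ldots, r_k\} = V(H)\setminus V(\core(H))$ returned by Algorithm~\ref{core_H}; that is, $\core(H) = H\setminus R$, since the core is the weakly induced subhypergraph on $V(\core(H))$. Consequently $\core(H)\setminus B = H\setminus (R\cup B)$, and because $B\subseteq V(\core(H))$ we have $B\cap R = \emptyset$. If $R = \emptyset$ there is nothing to prove, so I may assume $R\neq\emptyset$. I would then interpolate between $H\setminus B$ and $H\setminus(R\cup B)$ by removing $r_1, r_2, \ldots, r_k$ one index at a time, setting $L_0 = B$ and $L_i = B\cup\{r_1, \ldots, r_i\}$, and applying Lemma~\ref{lemmac}(2) at each step to conclude that $H\setminus L_i$ is degenerate if and only if $H\setminus L_{i-1}$ is degenerate. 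Chaining these equivalences from $i=1$ to $i=k$ then yields that $H\setminus B = H\setminus L_0$ is degenerate if and only if $H\setminus(R\cup B) = H\setminus L_k = \core(H)\setminus B$ is degenerate, which is what is needed.

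The step requiring care — and the main obstacle — is verifying that the hypothesis of Lemma~\ref{lemmac}(2) is met at each stage, namely that $\{r_i\}\in E(H\setminus L_{i-1})$. By Lemma~\ref{lemmac}(1), $\{r_i\}$ is a singleton hyperedge of $H\setminus\{r_1, \ldots, r_{i-1}\}$, so there is a hyperedge $h\in E(H)$ with $h\cap\left(V(H)\setminus\{r_1, \ldots, r_{i-1}\}\right) = \{r_i\}$; in particular $h\subseteq\{r_1, \ldots, r_i\}\subseteq R$. Since $B\cap R = \emptyset$, additionally deleting $B$ does not touch $h$, so $h\cap\left(V(H)\setminus L_{i-1}\right) = \{r_i\}$ as well, which is exactly the condition $\{r_i\}\in E(H\setminus L_{i-1})$ needed to invoke Lemma~\ref{lemmac}(2). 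This is the only place where the hypothesis $B\subseteq V(\core(H))$ is used, and it is precisely what keeps the deletion order of Algorithm~\ref{core_H} valid after $B$ has been removed. With this verification in hand, the iteration described above completes the argument.
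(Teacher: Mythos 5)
Your proposal is correct and follows essentially the same route as the paper: both reduce the claim to degeneracy via Theorem~\ref{thm: empty core} and then peel off the vertices of $R=V(H)\setminus V(\core(H))$ one at a time in the order given by Algorithm~\ref{core_H}, using Lemma~\ref{lemmac}(1) and (2). The only difference is presentational --- the paper packages the peeling as an induction on $|R|$, while you unroll it into an explicit chain of equivalences and verify the hypothesis $\{r_i\}\in E(H\setminus L_{i-1})$ of Lemma~\ref{lemmac}(2) directly at each step, which is a correct and slightly more self-contained rendering of the same argument.
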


\begin{proof}
We proceed with the proof by induction on the order of $R = V(H)\setminus V(\core(H))$. 
When $|R| = 0$, we have $H = \core(H)$, and the statement holds. 

Let $H$ be a hypergraph so that $|R|>0$ and suppose the statement holds for all hypergraphs $H_0$ such that $|V(H_0)\setminus V(\core(H_0))|<|R|$. 
Assume an indexing on $R$ by Algorithm~\ref{core_H}. The first element $r_1\in R$ is a singleton hyperedge according to Lemma~\ref{lemmac} (1), so $\core(H) = \core(H\setminus\{r_1\})$ by Lemma~\ref{lemmac} (3). 
We then have that $$|R\setminus\{r_1\}| = |V(H\setminus\{r_1\})\setminus V(\core(H))|<|R|.$$ By the induction hypothesis, $B$ is a lazy burning set for $\core(H)$ if and only if $B$ is a lazy burning set for $H\setminus\{r_1\}$.

We finish the proof by a series of equivalent statements. The set $B$ being a lazy burning set for $\core(H)$ is equivalent to $B$ being a lazy burning set for $\core(H\setminus\{r_1\})$ because they are the same hypergraph according to Lemma~\ref{lemmac} (3). 
The latter is equivalent to $H\setminus(B\cup \{r_1\})$ being degenerate by Theorem~\ref{thm: empty core}, which is equivalent to $H\setminus B$ being degenerate by applying Lemma~\ref{lemmac} (2) with $L = B$ and $v = r_1$. 
The latter condition is equivalent to $B$ being a lazy burning set for $H$ by Theorem~\ref{thm: empty core}. The proof follows.
\end{proof}

As an immediate corollary, we have the following.

\begin{corollary}
\label{H_equals_core}
For any hypergraph $H$, $b_L(H) = b_L(\core(H))$. 
\end{corollary}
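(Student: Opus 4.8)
The plan is to read the desired equality off Theorem~\ref{thm: also burns the core}, which says precisely that the lazy burning sets of $\core(H)$ are exactly those lazy burning sets of $H$ contained in $V(\core(H))$. Writing $R = V(H)\setminus V(\core(H))$ for the deleted vertices, I would split into the two inequalities. The inequality $b_L(H)\le b_L(\core(H))$ is immediate: an optimal lazy burning set $B$ for $\core(H)$ satisfies $B\subseteq V(\core(H))$, so Theorem~\ref{thm: also burns the core} makes $B$ a lazy burning set for $H$ as well, and hence $b_L(H)\le |B| = b_L(\core(H))$.

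For the reverse inequality $b_L(\core(H))\le b_L(H)$, I would take an optimal lazy burning set $B$ for $H$ and pass to $B' = B\setminus R = B\cap V(\core(H))$, which has $|B'|\le |B| = b_L(H)$. Once I show $B'$ is still a lazy burning set for $H$, Theorem~\ref{thm: also burns the core} upgrades it to a lazy burning set for $\core(H)$ (since $B'\subseteq V(\core(H))$), giving $b_L(\core(H))\le |B'|\le b_L(H)$ and completing the proof. So the real content is the claim that one may discard the vertices of $R$ from a lazy burning set of $H$ without destroying the burning property; equivalently, that some optimal lazy burning set avoids $R$ entirely.

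I would establish this claim via Theorem~\ref{thm: empty core}: it suffices to show $H\setminus B'$ is degenerate given that $H\setminus B$ is. The structural point is that every vertex of $R$ is deleted as a singleton by Algorithm~\ref{core_H}, and in the order $r_1,\dots,r_k$ produced there, each $r_i$ again forms a singleton hyperedge of $(H\setminus B')\setminus\{r_1,\dots,r_{i-1}\}$: the hyperedge responsible is contained in $\{r_1,\dots,r_i\}\subseteq R$, which is disjoint from $B'$ and therefore survives the removal of $B'$ intact. Hence the core computation on $H\setminus B'$ may delete all of $R$, reducing it to $H\setminus(B\cup R)$; since $H\setminus B$ is degenerate, Lemma~\ref{lemmac}(2) forces $H\setminus(B\cup R)$ to be degenerate too, and since deleting singletons leaves the core unchanged (Lemma~\ref{lemmac}(3)), $H\setminus B'$ is degenerate as required.

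The step I expect to be the main obstacle is exactly this claim that $R$ may be discarded, since removing vertices from a burning set ordinarily destroys the burning property; the argument must exploit the special role of $R$ as the automatically-propagating vertices rather than any generic monotonicity. A cleaner alternative I would keep in reserve is to argue directly in the burning process: writing $\mathrm{cl}(S)$ for the set of vertices eventually burned from $S$, the facts that $\mathrm{cl}$ is monotone and idempotent together with $R\subseteq \mathrm{cl}(\emptyset)$ (the vertices of $R$ burn by propagation through the singleton cascade of Algorithm~\ref{core_H}) yield $\mathrm{cl}(B\setminus R) = \mathrm{cl}(B\cup R) = \mathrm{cl}(B)$, so $B\setminus R$ burns $H$ precisely when $B$ does. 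Either route reduces the corollary to Theorem~\ref{thm: also burns the core} and delivers $b_L(H)=b_L(\core(H))$.
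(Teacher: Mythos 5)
Your proof is correct and takes essentially the same route as the paper, which states the corollary with no written proof at all, calling it ``an immediate corollary'' of Theorem~\ref{thm: also burns the core}. The one genuinely nontrivial half --- that an optimal lazy burning set for $H$ may be replaced by $B\setminus R$ without losing the burning property, so that Theorem~\ref{thm: also burns the core} applies --- is exactly the detail the paper leaves implicit, and your argument for it (the witnessing hyperedge for each $r_i$ lies inside $\{r_1,\dots,r_i\}\subseteq R$ and so survives the removal of $B\setminus R$, whence $\core(H\setminus(B\setminus R))=\core(H\setminus(B\cup R))$ is empty by Lemma~\ref{lemmac}) is sound.
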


By applying hypergraph cores, we show the monotonicity of lazy burning numbers with respect to vertex removal. 
The following theorem contrasts with zero forcing, which is not monotone. 

\begin{theorem}
The lazy burning number is monotone under vertex removal; that is, for any vertex $v\in V(H)$, $b_L(H)-1\leq b_L(H\setminus\{v\})\leq b_L(H).$
\end{theorem}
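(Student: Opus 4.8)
The plan is to reduce both inequalities to the degeneracy characterization of Theorem~\ref{thm: empty core}, which says that a set $B$ is a lazy burning set for a hypergraph exactly when the hypergraph obtained by deleting $B$ is degenerate. The one structural fact I would rely on throughout is the identity $(H\setminus\{v\})\setminus B = H\setminus(B\cup\{v\})$: deleting a vertex and then a set gives the same weakly induced subhypergraph as deleting their union. With this identity, every statement about burning sets of $H\setminus\{v\}$ translates into a statement about degeneracy of $H$ after removing one extra vertex, and vice versa.

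For the upper bound $b_L(H\setminus\{v\})\leq b_L(H)$, I would start from an optimal lazy burning set $B$ of $H$, so $H\setminus B$ is degenerate, and split into two cases. If $v\in B$, then $B\setminus\{v\}\subseteq V(H\setminus\{v\})$ and $(H\setminus\{v\})\setminus(B\setminus\{v\}) = H\setminus B$ is degenerate, so $B\setminus\{v\}$ burns $H\setminus\{v\}$ with size $b_L(H)-1$. If $v\notin B$, then $B\subseteq V(H\setminus\{v\})$ and I need $(H\setminus\{v\})\setminus B = H\setminus(B\cup\{v\})$ to be degenerate; here I would invoke Lemma~\ref{lemmac}~(2) with $L=B$ and $L'=B\cup\{v\}$, which gives that $\core(H\setminus(B\cup\{v\}))$ is an induced subhypergraph of $\core(H\setminus B)$, and the latter is empty, so the former is empty as well. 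Either way $B$ or $B\setminus\{v\}$ is a burning set of size at most $b_L(H)$.

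For the lower bound $b_L(H)\leq b_L(H\setminus\{v\})+1$, I would take an optimal lazy burning set $B'$ of $H\setminus\{v\}$ and set $B = B'\cup\{v\}$, noting $|B| = |B'|+1$ since $v\notin V(H\setminus\{v\})$. Then $H\setminus B = (H\setminus\{v\})\setminus B'$ is degenerate because $B'$ burns $H\setminus\{v\}$, so $B$ burns $H$ by Theorem~\ref{thm: empty core}, giving the claim. I do not expect a serious obstacle: the whole argument is an exercise in moving the single deleted vertex between the ``removed'' set and the ``burned'' set. The only point requiring genuine care is the $v\notin B$ case of the upper bound, where the degeneracy of $H\setminus B$ must be shown to survive deleting one further vertex; this is precisely the monotonicity of cores packaged in Lemma~\ref{lemmac}~(2), so the main thing to get right is citing that lemma in the correct direction rather than any computation.
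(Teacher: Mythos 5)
Your proposal is correct and follows essentially the same route as the paper: both reduce everything to Theorem~\ref{thm: empty core} via the identity $(H\setminus\{v\})\setminus B = H\setminus(B\cup\{v\})$, split the upper bound into the cases $v\in B$ and $v\notin B$, and handle the lower bound by adjoining $v$ to an optimal set for $H\setminus\{v\}$. The only cosmetic difference is in the $v\notin B$ case, where the paper deduces degeneracy of $H\setminus(B\cup\{v\})$ by observing that $B\cup\{v\}$ is a superset of a lazy burning set, while you invoke the core-monotonicity in Lemma~\ref{lemmac}~(2); both are valid one-line justifications of the same fact.
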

\begin{proof}
For the upper bound, let $B$ be an optimal lazy burning set for $H$. 
Suppose first that $v\notin B$. Observe that $B\cup\{v\}$ is a lazy burning set for $H$, so $H\setminus(B\cup\{v\})$ is degenerate by Theorem \ref{thm: empty core}. Also note that $H\setminus(B\cup\{v\})=(H\setminus\{v\})\setminus B$ and $B\subseteq V(H\setminus\{v\})$, so we have that $B$ is a lazy burning set for $H\setminus\{v\}$ by Theorem \ref{thm: empty core}. We can therefore conclude that $b_L(H\setminus\{v\})\leq |B|=b_L(H)$ in this case.

Now, suppose $v\in B$. Observe that $H\setminus B=(H\setminus\{v\})\setminus (B\setminus\{v\})$, and hence, they have the same core. By Theorem~\ref{thm: empty core}, $H\setminus B$ is degenerate, so $(H\setminus\{v\})\setminus (B\setminus\{v\})$ is also degenerate. This implies $B\setminus\{v\}$ is a lazy burning set for $H\setminus\{v\}$ by Theorem \ref{thm: empty core}. Hence, $b_L(H\setminus\{v\})\leq |B\setminus\{v\}|<|B|=b_L(H)$ in this case.

For the lower bound, let $B$ be an optimal lazy burning set for $H\setminus\{v\}$. 
By Theorem~\ref{thm: empty core}, we have that $(H\setminus\{v\})\setminus B$ is degenerate. Observe that $(H\setminus\{v\})\setminus B = H\setminus(B\cup\{v\})$. Since this hypergraph is degenerate,  $B\cup\{v\}$ is a lazy burning set for $H$ by Theorem~\ref{thm: empty core}. We therefore have $b_L(H)\leq |B\cup\{v\}|=|B|+1=b_L(H\setminus\{v\})+1$, and the lower bound follows.\end{proof}

\section{Complexity via Zero Forcing}

The principal goal of this section is to show that computing an upper bound on the lazy burning number is NP-complete, which answers a conjecture in \cite{BJP}.  We first provide another characterization of lazy burning sets in hypergraphs via zero forcing sets in their incidence graphs.

\begin{theorem}\label{zero_set_equivalence}
For a hypergraph $H$, a subset $B\subseteq V(H)$ is a lazy burning set for $H$ if and only if $B\cup E(H)$ is a zero forcing set for $\mathrm{IG}(H)$.
\end{theorem}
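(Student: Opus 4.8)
The plan is to exploit the bipartite structure of $\IG(H)$, whose edges run only between vertex-vertices in $V(H)$ and edge-vertices in $E(H)$. The central observation I would establish first is that once we color $B\cup E(H)$ black, every edge-vertex is black from the outset, and this never changes. Two consequences follow immediately: a vertex-vertex $v\in V(H)$ can never force, since all of its neighbors are edge-vertices and hence already black; and the only vertices that can ever be forced are white vertex-vertices, each of which can only be forced by an adjacent edge-vertex. Thus every force in any zero forcing process started from $B\cup E(H)$ has the form $h\to v$ with $h\in E(H)$, $v\in h$, and $v$ the unique white neighbor of $h$, which says exactly that every vertex of the hyperedge $h$ except $v$ is already black. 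This is precisely the lazy burning propagation rule, so the equivalence should reduce to a translation between the two chronological-list conditions from Section~2.

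For the forward direction, I would take a lazy burning set $B$ and run the lazy burning process, recording a witnessing hyperedge for each propagated vertex to obtain a chronological list of lazy burnings $(\{h_1,v_1\},\dots,\{h_k,v_k\})$ with $B_0=B$ and $B_k=V(H)$. I then claim $(h_1\to v_1,\dots,h_k\to v_k)$ is a chronological list of zero forces on $\IG(H)$ with initial set $S_0=B\cup E(H)$. Writing $S_{i-1}=B_{i-1}\cup E(H)$, the defining zero forcing condition $N[h_i]\setminus\{v_i\}\subseteq S_{i-1}$ reduces, because $h_i\in E(H)\subseteq S_{i-1}$, to $h_i\setminus\{v_i\}\subseteq B_{i-1}$, which is exactly the chronological-list condition for lazy burning that we already have. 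Since $S_k=V(H)\cup E(H)=V(\IG(H))$, this shows $B\cup E(H)$ is a zero forcing set.

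For the reverse direction, I would start from a zero forcing set $B\cup E(H)$ with a chronological list of zero forces $(u_1\to v_1,\dots,u_k\to v_k)$ and $S_0=B\cup E(H)$. By the structural observation, each $v_i$ lies in $V(H)$ and each $u_i$ is an edge-vertex $h_i\in E(H)$ with $v_i\in h_i$. Intersecting $N[h_i]\setminus\{v_i\}\subseteq S_{i-1}$ with $V(H)$ gives $h_i\setminus\{v_i\}\subseteq B\cup\{v_1,\dots,v_{i-1}\}$, the lazy burning chronological condition with $B_0=B$. As the forcing process blackens all of $V(\IG(H))$, we get $B\cup\{v_1,\dots,v_k\}=V(H)$, so $(\{h_1,v_1\},\dots,\{h_k,v_k\})$ is a chronological list of lazy burning whose associated burning set is $B$; Lemma~\ref{lem:ChronList_equals_Burning} then yields that $B$ is a lazy burning set for $H$.

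I expect the only genuine subtlety to be pinning down the two ``never'' claims cleanly, namely that edge-vertices are never recolored (immediate, since they start black) and that vertex-vertices never force, and then keeping the bookkeeping straight when passing between the closed-neighborhood condition $N[h_i]\setminus\{v_i\}\subseteq S_{i-1}$ governing zero forcing and the open condition $h_i\setminus\{v_i\}\subseteq B_{i-1}$ governing lazy burning. Once the bipartite structure is fixed and the always-black edge-vertices are discarded, the two chronological conditions are literally the same statement, so the argument is essentially a dictionary between the two processes rather than a new combinatorial insight.
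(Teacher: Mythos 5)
Your proof is correct and follows essentially the same route as the paper: both directions are handled by translating between a chronological list of lazy burnings $(\{h_i,v_i\})$ and a chronological list of zero forces $(h_i\to v_i)$ on $\IG(H)$, using the identity $S_{i}=B_{i}\cup E(H)$. The only difference is that the paper dismisses the reverse direction as ``analogous,'' whereas you supply the structural observation that actually justifies this (edge-vertices start black and stay black, so vertex-vertices never force and every force must have the form $h\to v$ with $h\in E(H)$), which is a welcome but not essentially different addition.
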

\begin{proof}
For the forward direction, suppose that $B$ is a lazy burning set for $H$. 
Let $k = |V(H)| - |B|$ and consider a chronological list of lazy burning of $S = (\{v_1,h_1\},\{v_2,h_2\},\dots, \{v_k, h_k\})$ with the corresponding list of burned vertices $B = B_0, B_1,\dots, B_k = V(H)$.

For $0\leq i\leq k$, define $B_i' = B_i\cup E(H)\subseteq V(\IG(H))$.
For $1\leq i\leq k$, we have that $h_i\setminus\{v_i\}\subseteq B_{i-1}$. 
It follows that $N_{\IG(H)}(h_i)\setminus\{v_i\}\subseteq B_{i-1} \subseteq B_{i-1}'$ in $\IG(H)$, and hence, the list $(h_1\to v_1, h_1\to v_2, \dots, h_k\to v_k)$ is a chronological list of zero forcing in $\IG(H)$, where $B_0', B_1', \dots, B_k'$ is the corresponding list of black vertices. 
We may conclude that $B\cup E(H)$ is a zero forcing set for $\IG(H)$. 

The reverse direction follows analogously by reversing the implications in the forward direction above. 
\end{proof}

We note that Theorem~\ref{zero_set_equivalence} gives the new inequality $$z(\mathrm{IG}(H))\leq b_L(H)+|E(H)|.$$

Denote the lazy burning number as defined in \cite{JT}, where singleton hyperedges do not spontaneously burn, by $b_L^o(H)$.  Consider the following decision problems. 

\medskip

\noindent PROBLEM: Lazy burning   \\
INSTANCE: A hypergraph $H$ and a positive integer $k \leq |V(H)|$.\\ 
QUESTION: Is $b_L(H)\leq k$?

\medskip

\noindent PROBLEM: Lazy burning without spontaneous burning  \\
INSTANCE: A hypergraph $H$ and a positive integer $k \leq |V(H)|$.\\ 
QUESTION: Is $b_L^o(H)\leq k$?

\medskip

To show the NP-completeness of the two problems, we need the following theorem. 

\begin{theorem}\label{thm 0}
For every hypergraph $H$, if $H$ contains no singleton hyperedge, then $b_L^o(H) = b_L(H)$. Further, $b_L^o(\core(H)) = b_L(H)$. 
\end{theorem}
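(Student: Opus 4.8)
The plan is to establish the first statement by a direct comparison of the two burning rules, and then to obtain the second statement as an immediate consequence of the first (applied to $\core(H)$) together with Corollary~\ref{H_equals_core}.

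For the first statement, I would recall that the sole distinction between the rule defining $b_L$ and the rule defining $b_L^o$ concerns singleton hyperedges: under $b_L$ a singleton hyperedge $\{v\}$ causes $v$ to burn spontaneously, whereas under $b_L^o$ only non-singleton hyperedges propagate. The crucial observation is that burning operates on a \emph{static} hyperedge set, since the process merely marks vertices as burned and never alters $E(H)$. Consequently, a hypergraph with no singleton hyperedge never acquires one during propagation, so every hyperedge encountered in either process is non-singleton. For such a hyperedge $h$ and any configuration of burned vertices, the condition that $h$ contains exactly one unburned vertex triggers a burn identically under both conventions. I would formalize this by verifying that, for any fixed initial set $B\subseteq V(H)$, the two processes generate the same sequence of burned-vertex sets round by round; hence $B$ is a lazy burning set under one convention if and only if it is under the other. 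Taking minimum cardinalities then gives $b_L^o(H)=b_L(H)$.

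For the second statement, I would note that $\core(H)$ has, by definition, every hyperedge of cardinality greater than one, and therefore no singleton hyperedge. Applying the first statement to the hypergraph $\core(H)$ yields $b_L^o(\core(H))=b_L(\core(H))$, and by Corollary~\ref{H_equals_core} we have $b_L(\core(H))=b_L(H)$. Chaining these two equalities completes the proof.

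The argument is essentially bookkeeping, so I expect no serious obstacle; the only point genuinely requiring care is making explicit that no singleton hyperedge can be created during propagation, which is precisely what reduces the two rules to the same process in the singleton-free setting. I would also briefly dispatch the degenerate boundary case in which $\core(H)$ is empty, where $b_L^o(\core(H))=b_L(\core(H))=b_L(H)=0$ and the identities hold trivially.
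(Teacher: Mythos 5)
Your proposal is correct and follows essentially the same route as the paper: the first claim is the observation that, absent singleton hyperedges, the two propagation rules coincide (the paper phrases this via chronological lists, you via round-by-round burned sets, which amounts to the same thing), and the second claim is the identical chain $b_L^o(\core(H))=b_L(\core(H))=b_L(H)$ using Corollary~\ref{H_equals_core}. No gaps.
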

\begin{proof}
For the first statement, if a list is a chronological list of lazy burning in one process, then it is also a chronological list of lazy burning in the other one. For the second statement, we have that $b_L^o(\core(H)) = b_L(\core(H)) = b_L(H)$, where the first equality holds since the core of a hypergraph contains no singleton hyperedge, and the second equality holds by Corollary~\ref{H_equals_core}.
\end{proof}

The symmetric and skew zero forcing decision problems are as follows. 

\medskip

\noindent PROBLEM: Zero forcing \\
INSTANCE: A graph $G$ and a positive integer $k \leq |V(G)|$.\\ 
QUESTION: Is $z(G)\leq k$?

\medskip

\noindent PROBLEM: Skew zero forcing \\
INSTANCE: A graph $G$ and a positive integer $k \leq |V(G)|$.\\ 
QUESTION: Is $z_0(G)\leq k$?

\medskip

Let $G$ be a graph and $N(v) = N_G(v)$ be the \textit{neighborhood} of $v\in V(G)$. 
The {\em open neighborhood hypergraph} ${\calN}(G)$ is a hypergraph with $V({\calN}(G)) = V(G)$ and $E(\calN(G)) = \{N_G(v):v\in V(G)\}$. 
The {\em closed neighborhood hypergraph ${\calN}[G]$} is defined analogously with the hyperedge set consisting of the {\em closed neighborhood} $N[v] = N_G[v] = N(v)\cup\{v\}$ of each vertex in $G$. 

The following theorem characterizes skew zero forcing as lazy burning on the open neighborhood hypergraph.

\begin{theorem}
\label{skew_forcing_equals_lazy_burning}
Let $G$ be a graph and $B\subseteq V(G)$. The set $B$ is a skew zero forcing set for $G$ if and only if $B$ is a lazy burning set for $\calN(G)$. In particular, $z_0(G) = b_L(\calN(G))$. 
\end{theorem}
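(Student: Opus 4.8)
The plan is to prove the two directions of the equivalence directly from the definitions of the skew zero forcing color change rule and the lazy burning propagation rule, observing that they coincide once we pass to the open neighborhood hypergraph $\calN(G)$. The key structural observation is that a hyperedge of $\calN(G)$ is exactly a set $N_G(v)$ for some vertex $v$, so the lazy burning rule ``a hyperedge burns its last unburned vertex'' translates precisely into the skew forcing rule ``a vertex $v$ forces its unique white neighbor.'' Concretely, for a set $B\subseteq V(G)=V(\calN(G))$, I would show that a single application of the skew forcing color change rule in $G$ and a single propagation step in $\calN(G)$ have the same effect on $B$.

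For the forward direction, suppose $B$ is a skew zero forcing set for $G$. I would take a chronological list of skew zero forces $(u_1\to w_1,\ldots,u_k\to w_k)$ witnessing this, where $w_i$ is the unique white neighbor of $u_i$ at the corresponding stage, so that $N_G(u_i)\setminus\{w_i\}\subseteq B\cup\{w_1,\ldots,w_{i-1}\}$. Since $N_G(u_i)$ is precisely the hyperedge of $\calN(G)$ associated to $u_i$, setting $h_i = N_G(u_i)$ and $v_i = w_i$ gives $h_i\setminus\{v_i\}\subseteq B\cup\{v_1,\ldots,v_{i-1}\} = B_{i-1}$, which is exactly the defining condition of a chronological list of lazy burnings of $\calN(G)$ with initial set $B$. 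By Lemma~\ref{lem:ChronList_equals_Burning}, this forces all of $V(\calN(G))$ to burn, so $B$ is a lazy burning set for $\calN(G)$. The reverse direction runs symmetrically: given a chronological list of lazy burnings $(\{h_i,v_i\})$ of $\calN(G)$, each $h_i$ equals $N_G(u_i)$ for some $u_i\in V(G)$, and the burning condition $h_i\setminus\{v_i\}\subseteq B_{i-1}$ is exactly the skew forcing condition for $u_i\to v_i$, yielding a chronological list of skew zero forces in $G$ that blackens all of $V(G)$.

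Having established the set-level equivalence, the identity $z_0(G) = b_L(\calN(G))$ follows at once: the skew zero forcing sets of $G$ and the lazy burning sets of $\calN(G)$ are the same subsets of the common ground set $V(G)=V(\calN(G))$, so their minimum cardinalities agree.

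The main subtlety to watch is the bookkeeping around multiplicity and the hyperedge-to-vertex correspondence: distinct vertices of $G$ may have equal neighborhoods, so $E(\calN(G))$ is a set of neighborhoods rather than an indexed family, and a given hyperedge $h$ may arise as $N_G(u)$ for several choices of $u$. This causes no difficulty for the equivalence, since to replay a skew force $u_i\to w_i$ as a burning step I only need \emph{some} hyperedge equal to $N_G(u_i)$ (which exists), and conversely to replay a burning step I only need \emph{some} vertex $u_i$ with $N_G(u_i)=h_i$ (which exists by definition of $E(\calN(G))$). I would make this choice explicit so that the translation between the two chronological lists is well defined in both directions, and remark that the edge used in the burning step need not be ``consumed'' since the lazy burning rule depends only on the set $B_{i-1}$, not on which hyperedge was previously used.
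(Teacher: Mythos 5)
Your proposal is correct and takes essentially the same route as the paper: both translate a chronological list of skew zero forces $(u_i\to v_i)$ into a chronological list of lazy burnings $(\{N_G(u_i),v_i\})$ and observe that the defining conditions $N_G(u_i)\setminus\{v_i\}\subseteq B_{i-1}$ coincide. Your explicit handling of the multiplicity of neighborhoods is a small point of extra care not spelled out in the paper, but the argument is the same.
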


\begin{proof}
Let $\calB = (B = B_0, B_1, \dots, B_k = V(G))$ be a list of vertex subsets, where $k = |V(H)| - |B|$, and $V(H)\setminus B = \{v_1, v_2, \dots, v_k\}$ is indexed so that $B_i = B_{i-1}\cup \{v_i\}$ for $1\leq i\leq k$. 

Suppose that $\calB$ is the corresponding list of black vertices of a chronological list of skew zero forces of $B$, say $S = (u_1\to v_1, u_2\to v_2, \dots, u_k\to v_k)$. 
For $1\leq i\leq k$, we have that $N(u_i)\setminus\{v_i\}\subseteq B_{i-1}$.  
Hence, $S$ is a chronological list of skew zero forcing if and only if the list $$(\{v_1, N(u_1)\}, \{v_2, N(u_2)\},\dots, \{v_k, N(u_k)\})$$ is a chronological list of lazy burning of $B$ on $\calN(G)$.
\end{proof}

An analogous and so omitted argument proves the following.

\begin{theorem}\label{thm: closed neighborhood and zero forcing}
Let $G$ be a graph and $B\subseteq V(G)$. 
If $B$ is a zero forcing set for $G$, then $B$ is a lazy burning set for $\calN[G]$. 
In particular, $b_L(\calN[G])\leq z(G)$. 
\end{theorem}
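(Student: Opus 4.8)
The plan is to mimic the proof of Theorem~\ref{skew_forcing_equals_lazy_burning}, translating a chronological list of (standard) zero forces on $G$ into a chronological list of lazy burnings on $\calN[G]$, but to carry out only the forward implication, since the closed-neighborhood case is genuinely one-directional.

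First I would set up the same bookkeeping as in Theorem~\ref{skew_forcing_equals_lazy_burning}: write $\calB = (B = B_0, B_1, \dots, B_k = V(G))$ with $k = |V(G)| - |B|$, and index $V(G)\setminus B = \{v_1, \dots, v_k\}$ so that $B_i = B_{i-1}\cup\{v_i\}$. Assuming $B$ is a zero forcing set, I would fix a chronological list of zero forces $(u_1\to v_1, \dots, u_k\to v_k)$ realizing this indexing, which by the definition recalled in Section~2 satisfies $N[u_i]\setminus\{v_i\}\subseteq B_{i-1}$ for each $i$.

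The key step is to observe that the hyperedge set of $\calN[G]$ consists exactly of the closed neighborhoods $N[u]$, so the defining inequality of a chronological list of zero forces is literally the defining condition of a chronological list of lazy burning once we match $v_i$ with the hyperedge $N[u_i]$. Concretely, I would claim that $(\{v_1, N[u_1]\}, \dots, \{v_k, N[u_k]\})$ is a chronological list of lazy burning of $B$ on $\calN[G]$: the membership $v_i\in N[u_i]$ holds because $v_i$ is forced by $u_i$ and hence is a neighbor of $u_i$, while $N[u_i]\setminus\{v_i\}\subseteq B_{i-1}$ is precisely the zero forcing condition. By Lemma~\ref{lem:ChronList_equals_Burning}, the set $V(\calN[G])\setminus\{v_1,\dots,v_k\} = B$ is then a lazy burning set for $\calN[G]$. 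Taking $B$ to be an optimal zero forcing set immediately gives $b_L(\calN[G])\leq z(G)$.

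The main conceptual point — rather than a computational obstacle — is to recognize why only one implication survives, in contrast to the open-neighborhood equivalence. Because $u_i\in N[u_i]$, the forcing vertex now lies inside its own hyperedge, so a lazy burning step on $\calN[G]$ that fills a hyperedge $N[u]$ may have $u$ itself as its last unburned vertex; such a step burns $u$ once all of its neighbors are burned, yet this need not correspond to any single zero force, since a neighbor of $u$ need not have $u$ as its unique white neighbor. This asymmetry is exactly what breaks the reverse direction and justifies stating only the inequality rather than an equality. Beyond this observation, the argument is a routine transcription and should present no difficulty.
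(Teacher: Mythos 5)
Your proof is correct and is exactly the argument the paper intends: the paper omits the proof of Theorem~\ref{thm: closed neighborhood and zero forcing} as ``analogous'' to that of Theorem~\ref{skew_forcing_equals_lazy_burning}, and your transcription of a chronological list of zero forces $(u_i \to v_i)$ into a chronological list of lazy burnings with $h_i = N[u_i]$, followed by an appeal to Lemma~\ref{lem:ChronList_equals_Burning}, is precisely that analogue. Your diagnosis of why only one implication survives also agrees with the paper's star-graph counterexample to the converse.
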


The converse of Theorem~\ref{thm: closed neighborhood and zero forcing} does not hold. Let $K_{1,j}$ be the star graph, where $V(K_{1,j}) = \{v,u_1, u_2,\dots, u_j\}$ and $v$ is the universal vertex. 
Note that $N_{K_{1,j}}[v] =  V(K_{1,j})$ and $N_{K_{1,j}}[u_i] = \{v, u_i\}$ for each leaf $u_i$. We then have that  
$$E(\calN[K_{1,j}]) = \{V(K_{1,j}), v u_1, v u_2, \dots, v u_j\}.$$ 
The set $\{v\}$ is a lazy burning set for ${\calN}[K_{1,j}]$ since burning spreads via each edge $vu_i\in E({\calN}[K_{1,j}])$. 
Meanwhile, the zero forcing number of a star graph is $j -1 $ by initially forcing all leaves except one. 
 
For completeness, we restate results on the complexity of skew zero forcing from \cite{CF}. We first need some terminology introduced there. For a graph $G$, let $T(G)$ denote the graph obtained by ``gluing a triangle'' to each vertex of $G$. 
More precisely, $V(T(G)) = V(G)\times\{1,2,3\}$ and 
\begin{equation*}
\begin{split}
E(T(G)) = &\ \{e\times\{1\}: e\in E(G)\}\ \cup \\
 &\ \{(v,i)(v, j): v\in V(G), i,j\in\{1,2,3\}, i\neq j\}.     
\end{split}
\end{equation*}
In particular, the induced subgraph $T(G)[V(G)\times\{1\}]$ is isomorphic to $G$ using a canonical map such that $(v,1)\mapsto v$, for $v\in V(G)$. 
\begin{theorem}[{\cite[Theorem~3.7]{CF}}]\label{red}
For a graph $G$ and $B\subseteq V(G)$, $B$ is a zero forcing set if and only if $B\times\{1\}$ is a skew zero forcing set for $T(G)$. Further, $B$ is optimal if and only if $B\times\{1\}$ is optimal; that is $z(G) = z_0(T(G))$. 
\end{theorem}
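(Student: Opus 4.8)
The plan is to analyze how the skew color change rule behaves on the triangle gadget glued to each vertex, and then show that once this local behavior is understood, the skew forcing dynamics on $T(G)$ project exactly onto ordinary zero forcing on $G$. Throughout, write $\mathrm{tri}(v) = \{(v,1),(v,2),(v,3)\}$ and call $(v,1)$ the \emph{apex}; the point of the construction is that $(v,2)$ and $(v,3)$ have no neighbours outside $\mathrm{tri}(v)$, whereas $(v,1)$ is additionally adjacent to $(u,1)$ for each $u\in N_G(v)$, so that the layer $V(G)\times\{1\}$ carries a faithful copy of $G$.

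First I would establish a local lemma describing triangle behavior under skew forcing: (i) if a triangle starts entirely white, then the first of its three vertices to be colored must be the apex $(v,1)$, and it is forced by a layer-one neighbour; (ii) one cannot have both $(v,2)$ and $(v,3)$ black while $(v,1)$ is white; and (iii) as soon as any single vertex of $\mathrm{tri}(v)$ is black, the entire triangle becomes black. Each is a short case analysis on which neighbour performs the force, using that $(v,2),(v,3)$ are confined to $\mathrm{tri}(v)$; part (ii), proved by examining the first instant a pendant vertex is colored while the apex is still white, is the decisive one. The consequence is that a white apex can never force a layer-one vertex, so every layer-one force is performed by an apex whose triangle is already black, and it occurs exactly when the remaining white layer-one neighbour is unique, which is precisely the zero forcing condition in $G$.

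With the lemma in hand, both directions for sets of product form $B\times\{1\}$ are routine. For the forward direction, if $B$ is a zero forcing set of $G$, then each triangle over $B$ fills in by (iii), after which layer-one forcing mirrors the forcing of $G$; running $G$'s process to completion blackens all of layer one and then every remaining triangle, so $B\times\{1\}$ is a skew forcing set. For the reverse direction, I would run skew forcing from $B\times\{1\}$ and read off the order in which apexes are colored; by (i) and (ii) each such coloring is a genuine layer-one force meeting the zero forcing condition, yielding a valid forcing process in $G$ with seed $B$, so $B$ is a zero forcing set.

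To obtain $z(G)=z_0(T(G))$, which is where the real work lies, the forward direction already gives $z_0(T(G))\le z(G)$, so it remains to bound $z(G)\le z_0(T(G))$ using an \emph{arbitrary} optimal skew forcing set $Z$, not necessarily of product form. I would set $B=\{v: Z\cap\mathrm{tri}(v)\neq\emptyset\}$, so that $|B|\le|Z|$, and prove $B$ is a zero forcing set of $G$ by induction on the order in which apexes turn black in the skew process on $T(G)$: whenever $(v,1)$ is colored with $v\notin B$, the local lemma forces it to be a layer-one force by some $(u,1)$ whose triangle and whose other layer-one neighbours are already black, and the inductive hypothesis translates this into the zero forcing move $u\to v$ in $G$ with $u$ in the zero forcing closure of $B$. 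Since $Z$ blackens all of $T(G)$, every apex is colored, so $B$ is a zero forcing set and $z(G)\le|B|\le|Z|=z_0(T(G))$; the optimality equivalence then follows since the map $B\mapsto B\times\{1\}$ preserves cardinality. The main obstacle throughout is precisely that skew forcing permits white vertices to force: the entire argument hinges on part (ii), which guarantees that the triangle gadget prevents any white-initiated layer-one force from occurring except at seed vertices of $B$, where it nonetheless corresponds to a legitimate force in $G$.
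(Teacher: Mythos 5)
Your proposal is correct, but there is nothing in the paper to compare it against: Theorem~\ref{red} is quoted verbatim from \cite[Theorem~3.7]{CF} and the present paper gives no proof of it, so you have in effect supplied a self-contained proof of the cited result. Your gadget analysis is sound. Claims (i) and (iii) check out directly: with all of $\mathrm{tri}(v)$ white, neither pendant can be forced (each would-be forcer still has two white neighbours) and the pendants cannot force the apex, so the apex must be coloured first and by a layer-one neighbour; and a single black vertex of a triangle does propagate to the whole triangle under the skew rule. The one imprecision is claim (ii) as literally stated: for an arbitrary seed $Z$ it can fail, not only when both pendants of $\mathrm{tri}(u)$ lie in $Z$, but also mid-process when one pendant is seeded and the white apex $(u,1)$ forces the other pendant (which requires every layer-one neighbour of $(u,1)$ to already be black). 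Both exceptions are harmless --- in the first case $u\in B$, so the white-initiated layer-one force still projects to a legitimate force from a seed of $G$, and in the second case $(u,1)$ has no white layer-one neighbour left to force --- and you flag the first explicitly. The statement your induction actually needs, and which does hold, is: if $Z\cap\mathrm{tri}(u)=\emptyset$ then $(u,1)$ is coloured strictly before $(u,2)$ and $(u,3)$, so any layer-one force performed by $(u,1)$ with $u\notin B$ occurs only after $(u,1)$ is black. With (ii) restated in that form, the forward simulation, the reverse reading-off of apex forces, and the projection $B=\{v:Z\cap\mathrm{tri}(v)\neq\emptyset\}$ for an arbitrary optimal skew forcing set $Z$ together give both the set-level equivalence and $z(G)=z_0(T(G))$.
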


By Theorem~\ref{red}, the skew zero forcing problem is at least as hard as the zero forcing problem, which is shown to be NP-complete in \cite{A, Y}. We then have the following result.
\begin{theorem}[{\cite[Corollary~3.8]{CF}}]\label{szf is NPC}
The skew zero forcing problem is NP-complete.
\end{theorem}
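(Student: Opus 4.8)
The plan is to establish NP-completeness in the standard two-step fashion: first show that the skew zero forcing decision problem lies in NP, and then show it is NP-hard by exhibiting a polynomial-time many-one reduction from the zero forcing problem, which is already known to be NP-complete by \cite{A, Y}.

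For membership in NP, I would take as a certificate a candidate skew zero forcing set $Z$ with $|Z|\leq k$. Because the skew zero forcing color change rule only ever enlarges the black set and $G$ has finitely many vertices, the process stabilizes after at most $|V(G)|$ rounds. One simulates it directly: in each round, scan every vertex to test whether it has exactly one white neighbor and, if so, force that neighbor black, repeating until no further forces are available. Checking that $Z$ eventually blackens all of $V(G)$ therefore runs in polynomial time, so the problem lies in NP.

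For NP-hardness, I would reduce from the zero forcing problem. Given an instance $(G,k)$, construct the graph $T(G)$ obtained by gluing a triangle to each vertex of $G$, as defined above. Since $|V(T(G))| = 3|V(G)|$ and its edge set is read off directly from $E(G)$, this construction runs in polynomial time, and I would output the instance $(T(G),k)$. By Theorem~\ref{red}, $z(G) = z_0(T(G))$, so $z(G)\leq k$ if and only if $z_0(T(G))\leq k$; hence $(G,k)\mapsto (T(G),k)$ is a valid polynomial-time many-one reduction. Combining this with the NP-completeness of zero forcing shows skew zero forcing is NP-hard, and together with membership in NP, that it is NP-complete.

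The main obstacle is already packaged away inside Theorem~\ref{red}: the genuine difficulty lies in verifying that gluing triangles transfers zero forcing sets of $G$ to skew zero forcing sets of $T(G)$ while preserving optimality, and this equivalence is assumed here. Given it, the remaining argument is routine. The only point requiring care is confirming that the threshold $k$ carries through unchanged, which holds precisely because the construction preserves the parameter exactly rather than up to an additive or multiplicative shift.
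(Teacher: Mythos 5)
Your proposal is correct and follows essentially the same route as the paper: the paper also derives NP-hardness by reducing from the zero forcing problem (NP-complete by \cite{A, Y}) via the triangle-gluing construction and the exact parameter equality $z(G) = z_0(T(G))$ of Theorem~\ref{red}, with membership in NP being routine. Your treatment simply spells out the NP-membership and the polynomial-time nature of the reduction more explicitly than the paper does.
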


We now state the main result of this section.

\begin{theorem}
\label{lbsc is npc}
The lazy burning problem is NP-complete. 
\end{theorem}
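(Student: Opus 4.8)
The plan is to show NP-completeness by establishing membership in NP and then reducing from a known NP-complete problem. Membership in NP is immediate: given a candidate lazy burning set $B$ with $|B|\le k$, one can verify in polynomial time that $B$ burns all of $H$ by simulating the propagation process (equivalently, by Theorem~\ref{thm: empty core}, by running Algorithm~\ref{core_H} on $H\setminus B$ and checking that the resulting core is empty, which takes $O(nm)$ time by Theorem~\ref{thm: core complexity}). The substance of the proof is therefore the hardness reduction.

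For hardness, I would reduce from the skew zero forcing problem, which is NP-complete by Theorem~\ref{szf is NPC}. The key tool is Theorem~\ref{skew_forcing_equals_lazy_burning}, which states that for any graph $G$ and any $B\subseteq V(G)$, the set $B$ is a skew zero forcing set for $G$ if and only if $B$ is a lazy burning set for the open neighborhood hypergraph $\calN(G)$, and in particular $z_0(G) = b_L(\calN(G))$. The reduction thus takes an instance $(G,k)$ of skew zero forcing and maps it to the instance $(\calN(G),k)$ of lazy burning. The construction of $\calN(G)$ from $G$ is polynomial: its vertex set is $V(G)$ and its hyperedge set is $\{N_G(v):v\in V(G)\}$, each computable by inspecting adjacencies. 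The equivalence $z_0(G)\le k \iff b_L(\calN(G))\le k$ follows directly from $z_0(G) = b_L(\calN(G))$, so $(G,k)$ is a yes-instance of skew zero forcing if and only if $(\calN(G),k)$ is a yes-instance of lazy burning. This completes the reduction and hence the proof of NP-completeness.

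I anticipate that the main obstacle is not conceptual but rather ensuring the reduction is genuinely polynomial-time and that the decision problem is correctly set up. One should confirm that building $\calN(G)$ takes time polynomial in $|V(G)|$ and $|E(G)|$ (it does, since there are $|V(G)|$ hyperedges each of size at most $|V(G)|-1$), and that the parameter $k$ satisfies the instance constraint $k\le |V(\calN(G))| = |V(G)|$, which is inherited from the skew zero forcing instance. A minor subtlety worth noting is that $\calN(G)$ may contain an isolated vertex or repeated or empty neighborhoods when $G$ has isolated vertices or twins, but these do not affect the validity of Theorem~\ref{skew_forcing_equals_lazy_burning}, so no special handling is required. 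Since both membership in NP and the polynomial reduction are established, the theorem follows.
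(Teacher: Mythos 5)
Your proposal is correct and follows essentially the same approach as the paper: membership in NP via Theorem~\ref{thm: empty core} and Algorithm~\ref{core_H}, and NP-hardness by reducing from skew zero forcing (Theorem~\ref{szf is NPC}) using the map $G \mapsto \calN(G)$ from Theorem~\ref{skew_forcing_equals_lazy_burning}. Your additional remarks on the polynomial size of $\calN(G)$ and the parameter constraint are sound and only make the argument slightly more explicit than the paper's.
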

\begin{proof}
Given a hypergraph $H$ and a vertex set $B\subseteq V(H)$, determining whether or not $B$ is a lazy burning set for $H$ takes polynomial time by determining whether or not $H\setminus B$ is degenerate, 
as in Theorem~\ref{thm: empty core}. 
Theorem~\ref{thm: core complexity} states that Algorithm~\ref{core_H} can verify it in polynomial time. Hence, the lazy burning problem is in NP.

By Theorem~\ref{skew_forcing_equals_lazy_burning}, we know that $\calN(\cdot)$ is a transformation from the skew zero forcing problem to the lazy burning problem. That is, if a vertex set $B\subseteq V(G)$ is a skew forcing a set of a graph $G$, then $B\subseteq V(\calN(G)) = V(G)$ is a lazy burning set for the hypergraph $\calN(G)$. 
Note that $\calN(\cdot)$ is a polynomial-time transformation. 
Therefore, the lazy burning problem is NP-hard by Theorem~\ref{szf is NPC}, and hence, is NP-complete. 
\end{proof}

We also derive the following.

\begin{theorem}
The lazy burning problem without spontaneous burning is NP-complete. 
\end{theorem}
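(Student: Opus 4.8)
The plan is to establish membership in NP directly, and then prove NP-hardness by a polynomial-time reduction from the (spontaneous) lazy burning problem, which is NP-complete by Theorem~\ref{lbsc is npc}.

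For membership in NP, I would use a candidate lazy burning set $B$ with $|B|\le k$ as a certificate and verify in polynomial time that $B$ burns all of $V(H)$ under the rule without spontaneous burning. The verification simulates the process: initialize the burned set to $B$, then repeatedly scan the hyperedges and burn the unique unburned vertex of any \emph{non-singleton} hyperedge having exactly one unburned vertex, continuing until no further vertex burns. Each round burns at least one new vertex, so there are at most $|V(H)|$ rounds, each costing polynomial time; the instance is accepted exactly when every vertex becomes burned.

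For NP-hardness, I would reduce from the lazy burning problem of Theorem~\ref{lbsc is npc} by mapping an instance $(H,k)$ to the instance $(\core(H),k)$. Algorithm~\ref{core_H} computes $\core(H)$ in polynomial time by Theorem~\ref{thm: core complexity}, so the map is a polynomial-time transformation. Its correctness is precisely the second statement of Theorem~\ref{thm 0}, namely $b_L^o(\core(H)) = b_L(H)$, which gives $b_L(H)\le k$ if and only if $b_L^o(\core(H))\le k$. Because $\core(H)$ has no singleton hyperedge by construction, the two burning rules in fact agree on the reduced instance, which is what makes the reduction land in the correct problem.

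Since the substantive work has already been carried out in Theorems~\ref{thm 0} and~\ref{lbsc is npc}, no step here poses a serious obstacle; the only point needing care is the degenerate case in which $\core(H)$ is empty, where $b_L(H)=0=b_L^o(\core(H))$ and, as $k$ is a positive integer, both instances are accepted, so the equivalence persists.
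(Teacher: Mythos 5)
Your proposal is correct and follows essentially the same route as the paper: NP-hardness via the polynomial-time map $(H,k)\mapsto(\core(H),k)$ justified by Theorem~\ref{thm 0}, and a routine polynomial-time verification for membership in NP (the paper checks degeneracy of $H'\setminus B$ after deleting singleton hyperedges rather than simulating the process directly, but this is an inessential difference).
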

\begin{proof}
Given a hypergraph $H$ and a vertex subset $B$, we first show that verifying whether or not $B$ is a lazy burning set can take polynomial time. This can be done by considering first taking another hypergraph $H'$ with $V(H') = V(H)$ and $E(H') = E(H)\setminus\{e\in E(H): |e|=1\}$. We know that $B$ is a lazy burning set without spontaneous burning if and only if $ H'\setminus B$ is degenerate. The first step takes $O(m)$-time, and the second step takes $O(nm)$-time by Theorem~\ref{thm: core complexity}.  In particular, the lazy burning problem without spontaneous burning is in NP.

The function $\core(\cdot)$ is a transformation from the lazy burning problem with spontaneous burning to the lazy burning problem without spontaneous burning, which is proved in Theorem~\ref{thm 0}. Hence, the lazy burning problem without spontaneous burning is NP-hard and, hence, is NP-complete. 
\end{proof}

We may use the complexity results from lazy burning to analyze the complexity of the skew zero forcing numbers of bipartite graphs. 
Given a bipartite graph $G = (X\cup Y, I)$, we may construct two hypergraphs $H_1$ and $H_2$, where $V(H_1) = X$, $E(H_1) = \{N_G(y): y\in Y\}$, and $H_2 = H_1^*$; that is, we may view every bipartite graph as a hypergraph $H = (V, E)$ by considering $V = X$ and $E = \{N_G(y): y\in Y\}$. 
Note that the hyperedge set $E=E(H_1)$ may be a multiset, and lazy burning performs identically on hypergraphs whenever $E(H_1)$ is a set or a multiset. 
\begin{theorem}\label{thm: lb-lbdual is szf}
Let $H$ be a hypergraph. We then have that $B$ is a lazy burning set for $H$ and $B^*$ is a lazy burning set for $H^*$ if and only if $B\cup B^*$ is a skew zero forcing set for $\IG(H)$. 
In particular, 
$$ z_0(\IG(H)) = b_L(H) + b_L(H^*). $$
\end{theorem}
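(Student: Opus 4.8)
The plan is to exploit a decoupling phenomenon that is special to \emph{skew} forcing on the bipartite graph $\IG(H)$. First I would record the two kinds of force available. Since $\IG(H)$ is bipartite with parts $V(H)$ and $E(H)$, a hyperedge-vertex $h\in E(H)$ can only force a genuine vertex $v\in V(H)$, and this is applicable exactly when $v$ is the unique white vertex of $h$, a condition that reads only the colors on the $V(H)$ side. Dually, a vertex $v\in V(H)$ can only force a hyperedge $h\in E(H)$, applicable exactly when $h$ is the unique white member of $N(v)$, a condition that reads only the colors on the $E(H)$ side. The decisive point is that in skew forcing the forcer need not itself be black, so the $h\to v$ forces never consult the $E(H)$ colors and the $v\to h$ forces never consult the $V(H)$ colors.

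From this locality I would derive the key structural claim: starting from $B\cup B^*$, the black vertices inside $V(H)$ evolve as a closed subprocess driven solely by $B$ and the $h\to v$ forces, and this subprocess is literally the lazy burning process on $H$ from $B$ (a vertex becomes black precisely when some hyperedge has all but it black); symmetrically, the black hyperedges inside $E(H)$ evolve as a closed subprocess driven solely by $B^*$ and the $v\to h$ forces, which, since $\IG(H)$ and $\IG(H^{*})$ agree as unordered bipartite graphs, is exactly lazy burning on the dual $H^{*}$ from $B^*$. Because neither subprocess ever reads the other side's colors, the two run entirely independently.

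With the decoupling established, both directions are immediate. The whole vertex set $V(\IG(H)) = V(H)\cup E(H)$ becomes black if and only if each side is fully blackened; by the structural claim the $V(H)$ side is fully blackened iff $B$ is a lazy burning set for $H$, and the $E(H)$ side iff $B^*$ is a lazy burning set for $H^{*}$. This is the stated equivalence. For the ``in particular'' clause I would observe that any skew zero forcing set $Z$ splits uniquely along the bipartition as $Z = (Z\cap V(H))\cup(Z\cap E(H))$, so minimizing $|Z|$ decomposes into minimizing the two parts independently, giving $z_0(\IG(H)) = b_L(H) + b_L(H^{*})$.

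The main obstacle, and really the only nontrivial point, is making the decoupling rigorous, namely that in an \emph{arbitrary} skew-forcing sequence the two subprocesses can be separated and each identified with an honest chronological list of lazy burning. The cleanest route, following the template of the proofs of Theorem~\ref{zero_set_equivalence} and Theorem~\ref{skew_forcing_equals_lazy_burning}, is to argue at the level of chronological lists: restrict a chronological list of skew forces to its $h\to v$ entries and check that the required containment $h_i\setminus\{v_i\}\subseteq B_{i-1}$ still holds, since the deleted $v\to h$ forces never changed the colors on the $V(H)$ side; and symmetrically restrict to the $v\to h$ entries to recover a chronological list on $H^{*}$ satisfying $N(v_i)\setminus\{h_i\}\subseteq B^*_{i-1}$. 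For the forward direction one instead concatenates the two lazy burning lists, each remaining valid because the extra initial black elements and the forces of the opposite type can only help. I would also note the harmless edge cases, such as singleton hyperedges (which force their vertex immediately, matching spontaneous burning) and isolated elements on either side (which must lie in any skew forcing set, matching the lazy burning requirement), but these need no separate argument.
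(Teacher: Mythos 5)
Your proof is correct and rests on the same observation as the paper's: the paper simply packages your decoupling argument by noting that $\calN(\IG(H))$ is the disjoint union $H\cup H^*$ and then invoking Theorem~\ref{skew_forcing_equals_lazy_burning}, which is exactly your identification of the $h\to v$ forces with lazy burning on $H$ and the $v\to h$ forces with lazy burning on $H^*$. Your more explicit verification at the level of chronological lists is a faithful unpacking of that one-line reduction.
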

\begin{proof}
The proof is straightforward by considering that the neighborhood hypergraph of an incidence graph $G = \IG(H)$ is precisely the disjoint union of $H\cup H^*$. 
The statement follows by Theorem~\ref{skew_forcing_equals_lazy_burning}. 
\end{proof}
We have the following by the alternative form of Corollary~\ref{cor1}. 
\begin{corollary}\label{cor: z=2bl}
For any hypergraph $H$, 
$ z_0(\IG(H)) = 2b_L(H) + |E(H)| - |V(H)|$.
\end{corollary}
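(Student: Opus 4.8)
The plan is to obtain this as an immediate algebraic consequence of Theorem~\ref{thm: lb-lbdual is szf} together with the alternative form of Corollary~\ref{cor1}; no new combinatorial argument is required. First I would record the identity supplied by Theorem~\ref{thm: lb-lbdual is szf}, namely $z_0(\IG(H)) = b_L(H) + b_L(H^*)$, so that the only remaining task is to express the dual term $b_L(H^*)$ in terms of quantities intrinsic to $H$.

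To do so, I would invoke the alternative form of Corollary~\ref{cor1}, which reads $b_L(H^*) = |E(H)| - |V(H)| + b_L(H)$. This in turn rests on the elementary bookkeeping that the dual hypergraph has vertex set $V(H^*) = E(H)$, so that $|V(H^*)| = |E(H)|$, whence the stated form follows by rearranging $|V(H)| - b_L(H) = |V(H^*)| - b_L(H^*)$. Substituting this directly into the expression from Theorem~\ref{thm: lb-lbdual is szf} gives
\[
z_0(\IG(H)) = b_L(H) + \bigl(|E(H)| - |V(H)| + b_L(H)\bigr) = 2b_L(H) + |E(H)| - |V(H)|,
\]
which is exactly the claimed equality.

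I do not anticipate any genuine obstacle here, since the entire argument is a single substitution combining two results already in hand. The only point requiring care is the order bookkeeping for the dual hypergraph --- confirming $|V(H^*)| = |E(H)|$ so that the dual form of Corollary~\ref{cor1} is applied correctly --- but this is routine, and the needed reformulation is in fact already recorded immediately following Corollary~\ref{cor1}.
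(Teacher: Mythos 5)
Your proposal is correct and matches the paper's own derivation exactly: the corollary is obtained by substituting the alternative form of Corollary~\ref{cor1}, $b_L(H^*) = |E(H)| - |V(H)| + b_L(H)$, into the identity $z_0(\IG(H)) = b_L(H) + b_L(H^*)$ from Theorem~\ref{thm: lb-lbdual is szf}. No gaps; the bookkeeping $|V(H^*)| = |E(H)|$ you flag is indeed the only point of care, and it is handled correctly.
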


\medskip\noindent
PROBLEM: Skew zero forcing on bipartite graphs\\
INSTANCE: A bipartite graph $G$ and an integer $k\leq |V(G)|$. \\
QUESTION: Is $z_0(G)\leq k$? 
\medskip

We arrive at the following new result on skew zero forcing. The results in \cite{DeAlba} also give a similar characterization of skew zero forcing on bipartite graphs using special matchings, but our technique is essentially different. 
\begin{theorem}\label{szf on bpt is npc}
The skew zero forcing problem on bipartite graphs is NP-complete. 
\end{theorem}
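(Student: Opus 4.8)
The plan is to establish NP-completeness by reducing from the lazy burning problem, which was shown to be NP-complete in Theorem~\ref{lbsc is npc}. The key tool is Corollary~\ref{cor: z=2bl}, which gives the exact identity $z_0(\IG(H)) = 2b_L(H) + |E(H)| - |V(H)|$, together with the observation that an incidence graph is always bipartite. First I would verify membership in NP: given a bipartite graph $G$ and a candidate set, one can apply the skew zero forcing color change rule repeatedly and check in polynomial time whether the whole vertex set becomes black, so the problem is in NP.

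For the hardness direction, I would take an arbitrary instance of the lazy burning problem, namely a hypergraph $H$ together with an integer $k$, and construct the bipartite graph $G = \IG(H)$. This construction is clearly polynomial-time, since $\IG(H)$ has $|V(H)| + |E(H)|$ vertices and one edge per incidence. By Corollary~\ref{cor: z=2bl}, determining $z_0(\IG(H))$ is equivalent to determining $b_L(H)$ via a fixed affine relationship. The subtlety to handle carefully is that Corollary~\ref{cor: z=2bl} relates the skew zero forcing number to $b_L(H) + b_L(H^*)$ (through Theorem~\ref{thm: lb-lbdual is szf}), not to $b_L(H)$ alone; so I must track both terms. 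Using the alternative form of Corollary~\ref{cor1}, $b_L(H^*) = |E(H)| - |V(H)| + b_L(H)$, the quantity $2b_L(H) + |E(H)| - |V(H)|$ is a strictly increasing function of $b_L(H)$ with known, polynomially-computable constants. Hence the query ``Is $b_L(H) \leq k$?'' translates directly into ``Is $z_0(\IG(H)) \leq 2k + |E(H)| - |V(H)|$?''

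Concretely, I would set $k' = 2k + |E(H)| - |V(H)|$ and argue the equivalence $b_L(H) \leq k \iff z_0(\IG(H)) \leq k'$ in both directions, each following immediately from the monotone identity in Corollary~\ref{cor: z=2bl}. One should check that $k'$ is a legitimate instance parameter, i.e.\ that $0 \leq k' \leq |V(\IG(H))| = |V(H)| + |E(H)|$; the lower bound follows since $b_L(H) \geq \max(0,\,|V(H)| - |E(H)|)$ forces $k'\geq 0$ for any meaningful $k$, and the upper bound is routine. Since the transformation $H \mapsto \IG(H)$ and the arithmetic computing $k'$ are both polynomial-time, the reduction is complete and the skew zero forcing problem on bipartite graphs is NP-hard, hence NP-complete.

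I expect the main obstacle to be bookkeeping rather than conceptual: ensuring that the reduction is genuinely from a problem already known to be NP-complete on the correct input class, and confirming that the dual term $b_L(H^*)$ does not introduce any hidden dependence beyond the closed-form affine relation. Once Corollary~\ref{cor: z=2bl} is invoked, the equivalence is an exact arithmetic identity, so no approximation or slack arises, and correctness of the reduction is immediate.
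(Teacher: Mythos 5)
Your proposal is correct and follows essentially the same route as the paper: both reduce from the lazy burning problem via the map $H \mapsto \IG(H)$ and rely on the identity $z_0(\IG(H)) = 2b_L(H) + |E(H)| - |V(H)|$ from Corollary~\ref{cor: z=2bl}. Your version is in fact more explicit than the paper's (which is quite terse) about the threshold $k' = 2k + |E(H)| - |V(H)|$ and the edge cases where $k'$ falls outside the nominal parameter range, but these are routine bookkeeping points and the underlying argument is identical.
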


\begin{proof}
For a hypergraph $H$, it takes polynomial time to generate the bipartite graph $\IG(H)$. By Corollary~\ref{cor:Burning_equals_CMatching}, from a lazy burning set $B$ for $H$, we obtain a $C$-matching by performing the lazy burning process and then obtain a lazy burning set for $H^*$.

Since verifying if a set of vertices is a skew zero forcing set for a graph takes polynomial time, and the lazy burning problem is NP-complete by Theorem~\ref{lbsc is npc}, we have that the skew zero forcing problem on bipartite graphs is NP-complete. 
\end{proof}

\section{Further Directions}

We presented several characterizations of lazy burning on hypergraphs via matchings and zero forcing. It would be interesting to see what new insights can be provided by these connections, especially in the case of zero forcing on graphs. The lazy burning of hypergraphs associated with Steiner Triple Systems was shown to equal their dimension; see 
\cite{STS}.  One question is to determine if there are relationships between the lazy burning number of hypergraphs associated with other designs, such as balanced incomplete block designs, and a natural notion of dimension. 

A hypergraph is {\em critical $k$-burnable} if $b_L(H) = k$, and removing any vertex results in the decrease of its lazy burning number. It would be interesting to characterize critical $k$-burnable hypergraphs. 

\section{Acknowledgements}
The first author was supported by an NSERC grant, and the second author was supported by an NSERC PGS-D scholarship.

\end{document}